\newfont{\Blackboard}{msbm10 scaled 1200}
\newfont{\roma}{cmr10 scaled 1200}
\def \Z {{\mathbb{Z}}}
\def \R {{\mathbb{R}}}
\def \N {{\mathbb{N}}}
\def \C {{\mathbb{C}}}
\def \eps {{\epsilon}}
\def \om {{\omega}}
\newcommand{\dint}{\displaystyle\int}
\newcommand {\nc}   {\newcommand}
\newcommand{\norm}[2]{\|#1 \| _{#2} }
\newcommand{\go}[1]{O \left( \dfrac{1}{\lb ^{#1}} \right)}
\newcommand{\gok}[1]{O \left( \dfrac{1}{k ^{#1}} \right)}
\newcommand{\pok}[1]{o \left( \dfrac{1}{k ^{#1}} \right)}
\newcommand{\goko}{O ( \dfrac{1}{k})}
\newcommand{\poko}{o \left( \dfrac{1}{k} \right)}
\nc {\be}   {\begin{equation}} \nc {\ee}   {\end{equation}} 
\nc {\beq}  {\begin{eqnarray}} \nc {\eeq}  {\end{eqnarray}}
\nc {\beqs} {\begin{eqnarray*}} \nc {\eeqs} {\end{eqnarray*}}
\newcommand{\bthe}{\begin{theorem}}
\newcommand{\ethe}{\end{theorem}}
\newcommand{\blem}{\begin{lemma}}
\newcommand{\elem}{\end{lemma}}
\newcommand{\bprop}{\begin{proposition}}
\newcommand{\eprop}{\end{proposition}}
\newcommand{\brk}{\begin{remark}}
\newcommand{\erk}{\end{remark}}
\newcommand{\bco}{\begin{corollary}}
\newcommand{\eco}{\end{corollary}}
\newcommand{\bdef}{\begin{definition}}
\newcommand{\fdef}{\end{definition}}
\newcommand{\bex}{\begin{example}}
\newcommand{\eex}{\end{example}}
\newcommand{\caA}{{\cal A}}
\newcommand{\caH}{{\cal H}}
\newcommand{\lb}{\lambda}
\newcommand{\al}{\alpha}
\newcommand{\dsum}{\displaystyle \sum}
\newtheorem{theorem}{Theorem}[section]
\newtheorem{lemma}[theorem]{Lemma}
\newtheorem{corollary}[theorem]{Corollary}
\newtheorem{remark}[theorem]{Remark}
\newtheorem{definition}[theorem]{Definition}
\newtheorem{proposition}[theorem]{Proposition}
\newtheorem{example}[theorem]{Example}
\def\dfrac{\displaystyle \frac }
\def\eps{\varepsilon }
\def\rr{\mathbb R}
\def\cc{\mathbb C}
\def\HH{\mathcal H}
\def\AA{\mathcal A}
\def\dint{{\displaystyle\int}}
\def\lb {{\lambda}}
\def\eps{{\epsilon}}
\title{Non uniform stability for the Timoshenko beam with tip load}
\author{Denis Mercier, Virginie R\'egnier \thanks{Laboratoire de Mathématiques
et ses Applications de Valenciennes, FR CNRS 2956, Institut des Sciences et Techniques de
Valenciennes, Université de Valenciennes et du Hainaut-Cambrésis, Le Mont
Houy, 59313 VALENCIENNES Cedex 9, FRANCE, e-mails : denis.mercier@univ-valenciennes.fr ; virginie.regnier@univ-valenciennes.fr}}
\def\edc{\end{document}}
\begin{document}
\maketitle 

Keywords: {Timoshenko system; locally distributed feedback; exponential and polynomial stability.}

\begin{abstract}
In this paper we consider a hybrid elastic model consisting of a Timoshenko beam and
a tip load at the free end of the beam. Under the equal speed wave propagation condition, we show polynomial decay for the model
which includes the rotary inertia of the tip load when feedback boundary
moment and force controls are applied at the point of contact between the beam and
the tip load.

\end{abstract}

\maketitle
\section{Introduction}

Beam structures have been studied extensively in the last decades: Euler-Bernoulli, Rayleigh and Timoshenko beams. The latest model is more accurate since it takes into account not only the rotary initial energy but also its deformation due to shear (see Timoshenko's book for physical explanations: \cite{Timobook}). A non-exhaustive list of contributions is: \cite{amtu}, \cite{maya}, \cite{caszua}, \cite{feng}, \cite{he}, \cite{kim}, \cite{liu}, \cite{mer}, \cite{merreg}, \cite{mor}, \cite{vu}, \cite{xu}. \\
\\
In this paper, we study the stabilization of a Timoshenko beam which has a tip load attached to one free end. The beam is clamped at one end while the tip load is fixed to the other end $x=1$ in such a manner that the center of mass of the load is coincident with its point of attachment to the beam. We assume interaction between the beam and the load. Thus the forces and moments within the vibrating beam are transmitted to the tip load which moves in accordance with Newton's law. Dissipation is introduced into the coupled model by applying feedback boundary moment and force controls on the displacement and shear  velocities. Multiplying the initial equations by suitable constants and rescaling in time, the coupled motions of the beam-load structure are governed by the following problem :

\begin{eqnarray}
\label{e11} (u_{tt}-(u_x+y)_x)(x,t) &=& 0,  \hskip 2cm \mbox{for } (x,t) \in (0,1) \times (0,\infty), \\
\label{e112}(y_{tt}-ay_{xx}+b(u_x+y))(x,t)&=&0, \hskip 2cm \mbox{for } (x,t) \in (0,1) \times (0,\infty), \\
u(0,t)=y(0,t)&=&0, \hskip 1.90cm \mbox{ for } t \in (0,\infty),
\label{e13}
\end{eqnarray}

with the initial conditions

\begin{equation}
\label{e14} u(x,0)=u_0(x), \ u_t(x,0)=u_1(x), \ y(x,0)=y_0(x), \ y_t(x,0)=y_1(x), \ \ \ \ \mbox{ for} \ \ x \in (0,1),
\end{equation}
and the boundary dissipation law

\begin{eqnarray}
u_{tt}(1,t)+k_1(u_x(1,t)+y(1,t))=-k_2u_t(1,t), \ \ \ \  \quad{\rm for} \ \ t \in (0,\infty),\label{e15}\\
y_{tt}(1,t)+k_3 y_x(1,t)=-k_4 y_t(1,t), \ \ \ \  \quad{\rm for} \ \ t \in (0,\infty),\label{e16}
\end{eqnarray}

where $a,b,k_1,k_2,k_3,k_4$ are strictly positive constants. \\
Denote by $\rho$, $I_{\rho}$, $EI$, $\kappa$, $\omega(x,t)$ and $\varphi(x,t)$, the mass density, the moment of mass inertia, the rigidity coefficient, the shear modulus of the elastic beam, the lateral displacement at location $x$ and time $t$ and the bending angle at location $x$ and time $t$ respectively. Then, our model coincides with those of \cite{feng}, \cite{grobb}, \cite{he}, \cite{vu}, ... with $u(x,t)= \omega \left( x,\sqrt{\dfrac{\kappa}{\rho}} t \right)$ , $y(x,t)= - \varphi \left( x,\sqrt{\dfrac{\kappa}{\rho}} t \right)$, $a =\dfrac{(EI) \rho}{\kappa I_{\rho}}$ and $b= \dfrac{\rho}{I_{\rho}}$. \\
\\
\\
This system is studied by Kim and Renardy (\cite{kim}), but with other boundary dissipation laws and it is then proved to be exponentially stable. \\
M. Bassam, D. Mercier, S. Nicaise and A. Wehbe also consider the same system but with other boundary dissipation laws. They study the decay rate of the energy of the Timoshenko beam with one boundary control
acting in the rotation-angle equation. Under the equal speed wave propagation condition ($a = 1$) and if $b$ is outside a discrete set of exceptional values, using a spectral analysis, the authors prove non-uniform stability and obtain the optimal polynomial energy decay rate. On the other hand, if $\sqrt{a}$ is a rational number and if $b$ is outside another
discrete set of exceptional values, they also show a polynomial-type decay rate using a frequency domain approach. See \cite{maya} and the references therein, particularly papers by F. Alabau-Boussouira (\cite{ala}), J.E. Muñoz Rivera and R. Racke, papers by S.A. Messaoudi and M.I. Mustafa, papers by A. Wehbe and his co-authors: A. Soufyane and W. Youssef...   \\
The stabilization of the Timoshenko beam is a subject of interest for many other authors recently: D. Feng, W. Zhang with a nonlinear feedback control (\cite{feng}), W. He, S. Zhang, S. Ge (see \cite{he}), {\"O}.~Morg{\"u}l with a dynamic boundary control (\cite{mor}). \\
The spectral analysis is studied by M.A. Shubov (\cite{shub} and Q.P. Vu, J.M. Wang, G.Q. Xu, S.P. Yung (\cite{vu}). \\
Systems of Timoshenko beams, serially connected or forming a tree-shaped network are another interesting point: see \cite{hanxu}, \cite{liu}, \cite{xu}, \cite{ZhangXu}. \\
\\
The system we consider is also studied by M. Grobbelaar-Van Dalsen in \cite{grobb} with the same feedback controls as ours. It is proved that uniform stability holds under a condition (called condition Z.)  Unfortunately this condition is not easy to check and the exponential stability (for $a=1$) remains an open question. This is why, in the present work, we consider the same problem which is still open. The main goal of this paper is to prove that the decay of the energy is not exponential, but polynomial. \\
We conjecture that the same results hold in the case $a \neq 1$. The computations are more complicated and still have to be performed. \\
\\
In Section \ref{wellposedness}, the abstract framework is introduced and the operator is proved to be m-dissipative in the energy space. The existence and uniqueness of a solution of the abstract evolution problem in appropriate spaces is established. The energy of the solution is then proved to decay to zero, using Benchimol Theorem (\cite{B}) (i.e. the operator is proved to have no purely imaginary eigenvalues). \\
Section \ref{spectrum} is dedicated to a thorough analysis of the spectrum of both the dissipative operator and the conservative associated operator. In particular, we give asymptotic expansions for the eigenvalues (cf. (\ref{dl1}), (\ref{dl2}), (\ref{dl01}) and (\ref{dl02})). \\
It is proved, in Section \ref{secriesz}, that the system of generalized eigenvectors of the dissipative operator (introduced in the latest section) forms a Riesz basis of the energy space. To this end, we use Theorem 1.2.10 of \cite{abd} which is a rewriting of Guo's version of Bari Theorem with another proof (see \cite{Guo}). The proof requires the asymptotic analysis performed before. \\
At last, the solution is explicitly expressed using the Riesz basis to prove that the energy decays polynomially (see Section \ref{secdecay}). \\
To examplify and validate our results, we give numerical computations and figures representing the spectrum of the dissipative operator in Section \ref{num}.

\section{ Well-posedness and strong stability} \label{wellposedness}

In this section we study the existence, uniqueness and strong stability of the solution of System (\ref{e11})-(\ref{e16}).
Setting
$$\Omega := (0,1)\ \ \quad{\rm and}\ \  H_L^1(\Omega):= \{f \in H^1(\Omega) : f(0) = 0\},$$
we define the energy space $\mathcal{H}$ as follows
$$\mathcal{H}:= H_L^1(\Omega) \times L^2(\Omega) \times H_L^1(\Omega)\times L^2(\Omega)\times \cc \times \cc,$$
with the inner product defined by
\begin{equation}
\begin{array}{lll}
<U,U_1>_{\HH}& :=& \displaystyle\int_0^1\Big(v\overline{v_1} + b^{-1}z\overline{z_1} + ab^{-1}y_x\overline{y_{1x}} + (u_x + y)
(\overline{u_{1x} + y_1})\Big)(x) dx \\&+& \dfrac{1}{k_1} \eta \overline{\eta_1 } +\dfrac{1}{k_3} \gamma \overline{\gamma_1},
\end{array}
\label{e23}
\end{equation}
for all $U = (u,v,y,z,\eta,\gamma)$, $U_1 = (u_1,v_1,y_1,z_1,\eta_1,\gamma_1)$ $\in$ $\HH$.
\begin{remark}
The norm $<U,U>_{\HH}^{\frac12}$ induced by (\ref{e23}) is equivalent to the usual norm of $\HH$.
\end{remark}

For shortness we denote by $\norm{\cdot}{}$ the  $L^2(\Omega)$-norm.

Now we define the linear unbounded operator $\mathcal{A}:D(\mathcal{A})\rightarrow\HH$ by:
$$\begin{array}{ll}
D(\mathcal{A}):=& \lbrace U= (u,v,y,z,\eta,\gamma) \in \mathcal{H} : u,y \in H^2(\Omega),v\in H_L^1(\Omega),z\in H_L^1(\Omega), \\
&\eta= v(1),\gamma=z(1)  \rbrace,
\\
\end{array}\\
$$
$\forall\, U  \in D(\mathcal{A}),$
\be\label{e25}
\mathcal{A}U := \big(v, (u_x + y)_x, z, ay_{xx} - b(u_x + y),-k_1(u_x(1)+y(1))-k_2 \eta, -k_3 y_x(1)-k_4 \gamma \big).\ee

The associated conservative operator is $\mathcal{A}_0:D(\mathcal{A})\rightarrow\HH$ defined as $\mathcal{A}$ but with $k_2=k_4=0$ i.e. 
\be \label{AandA0} \AA U = \AA_0 U - k_2 \eta e_5 - k_4 \gamma e_6, \ee
where $U= (u,v,y,z,\eta,\gamma) \in D(\AA)$, $e_5:= (0,0,0,0,1,0)$ and $e_6:= (0,0,0,0,0,1)$. \\
System (\ref{e11})-(\ref{e16}) is formally rewritten as the evolution equation
 \begin{equation}
(P) \;\; \left\{\begin{array}{ll}
U_t(t) = \mathcal{A}U(t), t \in (0;+ \infty),\\
U(0) = U_0, \ \ \ \ U_0\in \HH,
\label{P}
\end{array}\right.
\end{equation}
with $U(t) = (u,u_t,y,y_t,u_t(1),y_t(1))$ (note that the notation $U$ is kept for this function of the time $t$).

\begin{proposition}\label{maxi}
The operator $\AA$ is m-dissipative in the energy space $\HH$.
\end{proposition}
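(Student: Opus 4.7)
I would establish the two defining properties separately: $\mathrm{Re}\langle\mathcal{A}U,U\rangle_{\mathcal{H}}\le 0$ for every $U\in D(\mathcal{A})$, and surjectivity of $I-\mathcal{A}:D(\mathcal{A})\to\mathcal{H}$. Together, these give m-dissipativity by the Lumer--Phillips theorem.

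For the dissipativity inequality, I would take a generic $U=(u,v,y,z,\eta,\gamma)\in D(\mathcal{A})$ and expand $\langle\mathcal{A}U,U\rangle_{\mathcal{H}}$ via (\ref{e23}) and (\ref{e25}). Each of the four integrals on $\Omega$ is simplified by a single integration by parts in $x$. The endpoint terms at $x=0$ disappear because $v,z\in H_L^1(\Omega)$, while the interior cross-pairs---$(u_x+y)_x\overline{v}$ together with $v_x\overline{(u_x+y)}$, $y_{xx}\overline z$ together with $z_x\overline{y_x}$, and $-(u_x+y)\overline z$ together with $z\overline{(u_x+y)}$---combine into purely imaginary quantities and drop out of the real part. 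The residual boundary quantities at $x=1$ are cancelled against the contributions of the 5th and 6th components of $\mathcal{A}U$ using the constraints $\eta=v(1)$, $\gamma=z(1)$ and the boundary dissipation law encoded in (\ref{e25}). What remains is
\begin{equation*}
\mathrm{Re}\,\langle\mathcal{A}U,U\rangle_{\mathcal{H}}=-\frac{k_2}{k_1}|\eta|^2-\frac{k_4}{k_3}|\gamma|^2\le 0.
\end{equation*}

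For surjectivity, I would fix $F=(f_1,\ldots,f_6)\in\mathcal{H}$ and solve $(I-\mathcal{A})U=F$. The first, third, fifth and sixth component equations determine the auxiliary unknowns algebraically: $v=u-f_1$, $z=y-f_3$, while $\eta$ and $\gamma$ are affine combinations of $u(1)$, $y(1)$ and the data (through the boundary law). The remaining equations reduce to a coupled second-order elliptic system for $(u,y)$ on $(0,1)$ with homogeneous Dirichlet condition at $x=0$ and Robin-type conditions at $x=1$. I would attack it by the Lax--Milgram theorem on $H_L^1(\Omega)\times H_L^1(\Omega)$, testing the first equation against $\overline\phi$ and the second against a $b^{-1}$-weighted $\overline\psi$ so that the resulting sesquilinear form mirrors the geometry of (\ref{e23}); after completing the square, its diagonal quadratic reads $\int_0^1(|u|^2+|u_x+y|^2+b^{-1}|y|^2+ab^{-1}|y_x|^2)\,dx$ plus positive boundary squares at $x=1$. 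The main technical obstacle will be coercivity in the full $H^1\times H^1$-norm: the form directly controls $\|u_x+y\|_{L^2}$ rather than $\|u_x\|_{L^2}$, so the missing gradient has to be recovered via the triangle inequality $\|u_x\|_{L^2}\le\|u_x+y\|_{L^2}+\|y\|_{L^2}$, with $\|y\|_{L^2}$ dominated by $\|y_x\|_{L^2}$ through Poincar\'e's inequality in $H_L^1$ (available because $y(0)=0$). Continuity of the form is immediate, so Lax--Milgram yields a weak solution; elliptic regularity in one space variable upgrades it to $H^2\times H^2$, and reassembling the components places $U$ in $D(\mathcal{A})$, which closes the argument.
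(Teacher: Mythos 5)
Your dissipativity computation is exactly the paper's: expand $\langle\mathcal{A}U,U\rangle_{\mathcal{H}}$, integrate by parts, use $\eta=v(1)$, $\gamma=z(1)$ and the boundary terms built into the fifth and sixth components to arrive at $\Re\langle\mathcal{A}U,U\rangle_{\mathcal{H}}=-\frac{k_2}{k_1}|\eta|^2-\frac{k_4}{k_3}|\gamma|^2$. For maximality, however, you take a genuinely different route. The paper does not solve $(I-\mathcal{A})U=F$ at all: it solves $\mathcal{A}U=f$ explicitly by quadrature (reducing to $u_x+y=F_2+a_1$, integrating $ay_{xx}=f_4+bF_2+ba_1$ twice, and determining the two constants $a_1,a_2$ from the boundary relations), concludes that $0\in\rho(\mathcal{A})$, and then invokes the resolvent identity (Theorem 1.2.4 of the Liu--Zheng reference) to get $R(\lambda I-\mathcal{A})=\mathcal{H}$ for small $\lambda>0$. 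You instead attack $(I-\mathcal{A})U=F$ directly by Lax--Milgram; your sesquilinear form is correct (its diagonal is indeed $\int_0^1(|u|^2+|u_x+y|^2+b^{-1}|y|^2+ab^{-1}|y_x|^2)$ plus the positive boundary squares $\frac{1+k_2}{k_1}|u(1)|^2$ and $\frac{a(1+k_4)}{bk_3}|y(1)|^2$), and your coercivity argument via $\|u_x\|\le\|u_x+y\|+\|y\|$ and Poincar\'e for $y\in H_L^1$ goes through, provided you split off only a small multiple of $\|u_x+y\|^2$ so that the resulting $-\|y\|^2$ term is absorbed by $ab^{-1}\|y_x\|^2$ through Poincar\'e. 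Your approach is more robust (it is variational, needs no explicit integration, and directly produces the range condition at $\lambda=1$ without a perturbation step); the paper's approach is more elementary and, as a by-product, establishes the useful fact that $\mathcal{A}$ is invertible, which is exploited again in the proof of strong stability. Both are valid proofs of m-dissipativity.
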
 

\begin{proof}  We start with the dissipativeness.\\
Let $U = (u,v,y,z,\eta,\gamma)$ $\in$ $D(\AA)$. Using (\ref{e23}) and (\ref{e25}), we obtain : 
$$ 
\begin{array}{lll}
<\AA U, U>_{\HH} &= & \dint_0^1 \Big( (u_x + y)_x\overline{v} + b^{-1}\Big(ay_{xx} - b(u_x + y)\Big)\overline{z}\\
 &+& ab^{-1}z_x\overline{y_x} + (v_x + z)(\overline{u_x + y})\Big)(x) dx\\
 & + &\dfrac{1}{k_1}(-k_1(u_x(1)+y(1)-k_2 \eta)\overline{\eta} +\dfrac{1}{k_3} (-k_3 y_x(1)-k_4 \gamma) \overline{\gamma} .
\end{array} $$

Then, integrating by parts and using the boundary conditions, we get

\begin{equation}
\Re <\AA U, U>_{\HH} = -\dfrac{k_2}{k_1} \vert v(1)\vert^2-\dfrac{k_4}{k_3} \vert z(1)\vert^2  \leq 0.\label{e27}
\end{equation}
Therefore, $\AA$ is dissipative.  

Now, we prove that $\AA$ is maximal. For that purpose, we consider any $f = (f_1,f_2,f_3,f_4,\eta_1,\gamma_1)$ $\in$ $\HH$ and 
we look for a unique element $U = (u,v,y,z,\eta,\gamma)$ $\in$ $D(\AA)$ such that
$$\AA U = f.$$ 
Equivalently, we get $v=f_1$, $z=f_3, \eta=f_1(1), \gamma=f_3(1),$ and we have the following system to solve: 
\begin{equation}
(u_x + y)_x(x) = f_2(x),\label{e28}
\end{equation}
\begin{equation}
(ay_{xx} - b(u_x + y))(x) = f_4(x),\label{e29}
\end{equation}
\begin{equation}
-k_1(u_x(1)+y(1))-k_2 \eta=\eta_1,\label{b1}
\end{equation}
\begin{equation}
-k_3 y_x(1)-k_4 \gamma =\gamma_1.\label{b2}
\end{equation}

From (\ref{e28}) it follows $u_x(x)+y(x)=F_2(x)+a_1,$ where $F_2(x)=\int_0^x f_2(u)du$ and $a_1$ is a constant. Consequently (\ref{e29}) becomes 
\be \label{new1} a y_{xx}(x)= f_4(x)+b F_2(x)+b a_1. \ee 

Let $G_4$ (resp. $G_2$) be the unique solution of $(G_4)_{xx}=f_4$  (resp.   $(G_2)_{xx}=F_2$) satisfying
$G_4(0)=(G_4)_x(0)=0$ (resp. $G_2(0)=(G_2)_x(0)=0$). Then, we find that the solutions of (\ref{new1}) satisfying  $y(0)=0$ are 
$$y(x)=\dfrac{1}{a} G_4(x)+\dfrac{b}{a} G_2(x)+\dfrac{b}{a}a_1 \dfrac{x^2}{2} + a_2 x,$$ where $a_2$ is a constant. 
Now, let $y$ as previously. Clearly $y \in  H^2(\Omega) \cap H^1_L(\Omega)$ and we find that necessarily 
$u(x)=\dint_0^x (-y(u)+F_2(u)) du +a_1 x$ (since $u(0)=0$).

Inserting $u_x(1)+y(1)= F_2(1)+a_1$ in (\ref{b1}) we get an equation with only the unknown $a_1$ and this equation admits a unique solution. 
Therefore (\ref{b2}) becomes an equation with a unique solution $a_2.$ Finally, inserting these two constants in $u$ and $y,$ 
it is easy to check that we have found a unique $U=(u,v,y,z,\eta,\gamma)\in  D(\AA)$ such that $\AA U=f.$ 
  
Therefore we deduce that $0\in \rho(\mathcal{A})$. Then, by the resolvent identity, for $\lambda >0$ small enough, $R(\lambda I-\AA)=\HH$ (see Theorem 1.2.4 in \cite{LZ}). \end{proof} 

Due to Lumer-Phillips Theorem (see \cite{Pazy}, Theorem 1.4.3), it follows from Proposition \ref{maxi} that the operator $\AA$ generates a $C_0$-semigroup of contractions $e^{t\AA}$ on $\HH$. Consequently it holds:

\bthe\label{th2.3} (Existence and uniqueness)\\
(1) If $U_0$ $\in$ $\HH$, then System $(P)$ has a unique solution\\
$$U \in C^0(\rr_+,\HH).$$

(2) If $U_0$ $\in$ $D(\AA)$, then system $(P)$ has a unique solution
$$U  \in C^0(\rr_+,D(\AA))\cap C^1(\rr_+,\HH).$$
\ethe

\brk \label{A0} Let $(P_0)$ be the conservative problem associated to problem $(P)$ (in other words $(P_0)$ is Problem $(P)$ with $k_2=k_4=0$) and $\AA_0$ be the associated operator then Proposition \ref{maxi} (resp. Theorem \ref{th2.3}) remains true for $\AA_0$ (resp. $(P_0)\;$).   
\erk

To end this section we give a first stability result: 

\bthe\label{th21}(Strong stability)\\
System (\ref{e11})-(\ref{e16}) is strongly stable, i.e for any solution $U$ of $(P)$ with initial data $U_0 \in\HH$, it holds 
$$\lim_{t\rightarrow\infty }E(t)=0,$$ where $E(t)=\dfrac{1}{2}\|U(t)\|^2_\HH.$ 
\ethe

\begin{proof}
Since the resolvent of $\AA$ is compact in $\HH$, using Benchimol Theorem \cite{B}, System $(P)$ is strongly stable if and only if
$\AA$ does not have purely imaginary eigenvalues.  We have already seen that $\AA$ is invertible. Thus we consider $\lb \in \rr ^*$ and $U=(u,v,y,z,\eta,\gamma) \in D(\AA)$ such that 

$$\AA U = i \lb U.$$ 

Since $\Re <\AA U,U> =0,$ we get from (\ref{e27}) that $\eta=v(1)=0$ and $\gamma=z(1)=0,$ and we deduce that $(u,v)$ satisfies

\begin{equation}
\left\{ \begin{array}{lll}
(u_{xx}+y_x+\lambda^2 u)(x)&=&0, \\
(a y_{xx}- bu_{x}-b y+\lambda^2 y)(x)&=&0, \\
\end{array}\right.\label{e210}
\end{equation}

with the boundary conditions 

\begin{equation}
\left\{ \begin{array}{lll}
u(0)&=y(0)&=0, \\
u(1)&=y(1)&=0, \\
u_x(1)&=y_x(1)&=0. \\
\end{array}\right.\label{e211}
\end{equation}

From the first equation of (\ref{e210}), $y_x(x) = -u_{xx}(x)-\lambda^2 u(x)$. Thus $a y_{xx}(x) = - a u^{(3)}_x(x) - a \lambda^2 u_x(x)$. Now, from the second equation of (\ref{e210}), it follows: $a y_{xx} = bu_{x}(x) + b y(x) - \lambda^2 y(x)$. Then $u$ is solution of 

\be\label{ord1} a u_x^{(4)}(x) + (a + 1) \lb^2 u_x^{(2)}(x) + (\lb^2 -b) \lb^2 u(x) = 0.\ee

Note that, from the boundary conditions (\ref{e211}) and the relations (\ref{e210}), it also holds $u_{xx}(1)=u_{xxx}(1)=0.$
Thus $u$ is solution of (\ref{ord1}) and satisfies $u(1)=u_{x}(1)=u_{xx}(1)=u_{xxx}(1)=0.$ Therefore, from the general theory of ordinary differential equations, we deduce that $u \equiv 0.$ \\
It follows that $y \equiv 0$ and finally $U \equiv 0.$ Consequently, $\AA$ has no eigenvalue on the imaginary axis. 
\end{proof}


\section{Spectrum analysis for the case $a = 1$} 
\label{spectrum}

\subsection{Main results and notation} 

Let us begin with announcing the main results concerning the spectrum analysis. The following theorem is also a way to introduce the notation which is used during the whole section. That is why it is given first whereas establishing its proof is the goal of the following subsections.

\begin{theorem} \label{ppr} (Spectrum and eigenvectors of both the conservative and dissipative operators) \\

   \begin{enumerate} 
   \item {\bf Spectrum of $\AA_0$.}\\
Let $\sigma_0$ be the spectrum of $\AA_0.$ We can split $\sigma_0$ as follows: 
$$\sigma_0=\sigma_0^1\cup \sigma_0^2,$$ where  
$$\sigma_0^1=\{\kappa_{i}^0\}_{i \in I_0},$$ 
and $I_0$ is a finite set, the multiplicity of $\kappa_{i}^0$ is $m_{i,0}$ and is finite.
$$\sigma_0^2=\{\lb_k^{j,0}\}_{j=1,2, |k|\geq k_0},$$
and the multiplicity of $\lb_k^{j,0} (j=1,2)$ is one.\\

   \item {\bf Eigenvectors of $\AA_0$.}\\
For each $i \in I_0$,  we will denote by ${\tilde{\phi_i^l}},l=0,...,m_i-1,$ a system of independent eigenvectors  associated with $\kappa_i^0 \in \sigma_0^1.$ \\
For each $k\in \Z, |k|\geq k_0,$  we will denote by $\phi_k^j  (j=1,2)$ an associated 
eigenvector of $\lb_k^{j,0} (j=1,2) \in \sigma_0^2.$ \\
Moreover, since $\AA_0$ is skew-adjoint, the system 
$${\cal F}_0= \{\tilde{\phi_i^l}\}_{i\in I_0,l=0,...,m_i-1} \cup \{\phi_k^j\}_{|k|\geq k_0,j=1,2}$$
can be chosen such that ${\cal F}_0$ forms an orthonormal basis of $\caH.$ \\

   \item {\bf Spectrum of $\AA$.}\\
Similarly, let $\sigma$ be the spectrum of $\AA.$ We can split $\sigma$ as follows: 
$$\sigma=\sigma^1\cup \sigma^2,$$ where  
$$\sigma^1=\{\kappa_{i}\}_{i\in I},$$ 
and $I$ is a finite set, the algebraic multiplicity of  $\kappa_{i}$ is $m_{i}$ and is finite, the geometric multiplicity is $n_i,$ with $1\leq n_i\leq m_i$.
$$\sigma^2=\{\lb_{k}^j\}_{j=1,2, |k|\geq k_0},$$
and the multiplicity of $\lb_{k}^j (j=1,2)$ is one. \\

   \item {\bf Generalized eigenvectors of $\AA$.}\\
For each $i \in I$,  we will denote by $\{\tilde{\psi_{ik}^l}\}_{l=1}^{\delta_{ik}},k=1...,n_i,$ a system of independent generalized eigenvectors  associated with $\kappa_i \in \sigma^1,$ which forms Jordan chains, i.e 
$\delta_{ik}\geq 1,$\\$ k=1,...,n_i, \sum_{k=1}^{n_i}\delta_{ik} = m_i,$ $$(\AA-\kappa_i I)\psi_{ik}^l=\psi_{ik}^{l-1}, l=1,...,\delta_{ik}, $$ where we assume that $\psi_{ik}^{0}=0$.\\
For each $k\in \Z, |k|\geq k_0,$  we will denote by $\psi_k^j ( j=1,2)$ an associated 
eigenvector of $\lb_k^{j} (j=1,2) \in \sigma^2.$ \\
The system 
$${\cal F}= \{\tilde{\psi_i^l}\}_{i\in I,l=0,...,m_i-1} \cup \{\psi_k^j\}_{|k|\geq k_0,j=1,2}$$
is chosen such that any $\psi_k^j \in {\cal F}, |k|\geq k_0, j=1,2,$ satisfies $\|\psi_k^j\|_{\caH}=1.$
   \end{enumerate}

\end{theorem}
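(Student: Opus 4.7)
The plan is to reduce the eigenvalue equation $\AA U = \lb U$ (and similarly $\AA_0 U = \lb U$) to a scalar fourth-order ODE in $u$ with $\lb$-dependent boundary conditions, then to analyze the corresponding characteristic determinant as $|\lb|\to\infty$. Both operators have compact resolvent, since the embedding $D(\AA)\hookrightarrow\HH$ is compact by Rellich's theorem, so each spectrum is purely discrete and made of eigenvalues of finite algebraic multiplicity. Because $\AA_0$ is skew-adjoint -- the computation of $\Re\langle\AA U,U\rangle_{\HH}$ in the proof of Proposition \ref{maxi} gives $0$ when $k_2=k_4=0$ -- its spectrum lies on $i\R$ and the spectral theorem for skew-adjoint operators with compact resolvent produces an orthonormal basis of eigenvectors; the finite/infinite splitting $\sigma_0=\sigma_0^1\cup\sigma_0^2$ follows from the same asymptotic analysis I use for $\AA$.

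Spelling out $\AA U=\lb U$ gives $v=\lb u$, $z=\lb y$, the interior system $u_{xx}+y_x=\lb^2 u$ and $y_{xx}-b(u_x+y)=\lb^2 y$ (using $a=1$), and, from (\ref{e15})-(\ref{e16}), the mixed boundary conditions
\begin{eqnarray*}
\lb^2 u(1)+k_1(u_x(1)+y(1))+k_2\lb u(1)&=&0,\\
\lb^2 y(1)+k_3 y_x(1)+k_4\lb y(1)&=&0.
\end{eqnarray*}
Assuming $\lb^2+b\ne 0$, eliminating $y$ yields the ODE $u^{(4)}-2\lb^2 u''+\lb^2(\lb^2+b)u=0$, whose characteristic roots satisfy $r^2=\lb^2\pm i\lb\sqrt{b}$, so that for $|\lb|$ large the four roots behave like $\pm\lb\pm i\sqrt{b}/2+O(1/\lb)$. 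Writing the general solution in this exponential basis and imposing $u(0)=y(0)=0$ together with the two conditions at $x=1$ produces a $4\times 4$ linear system whose determinant $F(\lb)$ (resp.\ $F_0(\lb)$ when $k_2=k_4=0$) vanishes exactly on the corresponding spectrum.

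Next I would carry out the asymptotic expansion of $F$ as $|\lb|\to\infty$ on or near the imaginary axis. Injecting $\sqrt{\lb^2\pm i\lb\sqrt{b}}=\lb\pm i\sqrt{b}/2+O(1/\lb)$ into the exponentials shows that $F$ factorizes, up to an $O(1/\lb)$ remainder, into two trigonometric expressions, the zeros of each forming an arithmetic progression on $i\R$. A Rouché argument in disjoint disks of shrinking radius around each approximate zero then provides, for $|k|\ge k_0$, two families $\{\lb_k^1\}$ and $\{\lb_k^2\}$ of simple eigenvalues and gives the asymptotic expansions (\ref{dl1})-(\ref{dl2}); running the same analysis on $F_0$ delivers $\{\lb_k^{j,0}\}$ together with (\ref{dl01})-(\ref{dl02}). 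By the argument principle on a circle of large enough radius, only finitely many eigenvalues lie outside the strips and they constitute $\sigma^1$ (resp.\ $\sigma_0^1$). On each finite-dimensional generalized eigenspace the Jordan normal form theorem then provides the required chain structure; for $\AA_0$, skew-adjointness forces $n_i=m_i$, completing the orthonormal basis.

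The main obstacle is the asymptotic analysis of $F$: the boundary determinant mixes $\lb^2$, $\lb$, and the two close frequencies, so care is required to separate the two families, to rule out unaccounted zeros in the strips, and to establish simplicity. A secondary difficulty is the uniformity of the Rouché estimate with respect to the dissipation parameters $k_2,k_4$: since the passage from $F_0$ to $F$ is a perturbation of order $1/\lb$ in the relevant region, each $\lb_k^{j,0}$ on $i\R$ generates exactly one nearby $\lb_k^j$ with strictly negative real part, which is precisely the mechanism producing the two high-frequency families simultaneously for $\AA$ and $\AA_0$.
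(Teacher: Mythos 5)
Your overall strategy coincides with the paper's: compact resolvent and skew-adjointness of $\AA_0$ give the discrete spectrum and the orthonormal eigenbasis; the eigenvalue problem is reduced to the fourth-order ODE $u^{(4)}-2\lb^2u''+\lb^2(\lb^2+b)u=0$ with $\lb$-dependent conditions at $x=1$; and the high-frequency eigenvalues are extracted from a $4\times 4$ characteristic determinant via Taylor expansion of the roots $t_{1,3}=\sqrt{\lb}\sqrt{\pm i\sqrt{b}+\lb}$ and Rouch\'e's theorem. Up to that point you are reproducing Sections 3.2--3.3 of the paper.

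There is, however, one concrete gap at the decisive step. You assert that the determinant ``factorizes, up to an $O(1/\lb)$ remainder, into two trigonometric expressions, the zeros of each forming an arithmetic progression,'' so that each family would come from its own factor. That is not what happens here: the leading term is $f_0(\lb)=\frac{1}{4}e^{-t_1-t_3}\left(e^{t_1+t_3}-1\right)^2$ (see (\ref{fi})), a perfect square whose zeros are the points with $t_1+t_3\in 2i\pi\Z$, i.e.\ $\lb=ik\pi+O(1/k)$, each zero being \emph{double}. The first Rouch\'e argument (Lemma \ref{asympexp1}, balls of radius $k^{-1/4}$) therefore only yields two roots counted with multiplicity near each $ik\pi$ and can neither separate the families nor give simplicity. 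The separation occurs only at order $1/k$: writing $\lb_k=ik\pi+\eps_k$ leads to the quadratic $(k\eps_k)^2-i\gamma_1(k\eps_k)-\gamma_2+o(1)=0$ with $\gamma_1,\gamma_2$ as in (\ref{defgamma1})--(\ref{defgamma2}), whose roots $i\al_1,i\al_2$ are distinct precisely under Condition ${\bf (C_1)}$, and a second, finer Rouch\'e argument on disks of radius of order $|\al_1-\al_2|/k$ centred at $ik\pi+i\al_j/k$ is needed to obtain two \emph{simple} eigenvalues; when ${\bf (C_1)}$ fails a separate expansion is required (Proposition \ref{asympexp3}). You flag the separation as ``the main obstacle,'' but the mechanism you propose for overcoming it (distinct leading-order factors) is not available for this system, so the key step is missing. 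A secondary omission: establishing $\Re\lb_k^j=-\beta_j/k^2+o(1/k^2)$ with $\beta_j>0$ in (\ref{dl1})--(\ref{dl2}) requires pushing the expansion one order further (Steps 2 and 3 of the proof of Proposition \ref{asympexp2}), which your sketch does not reach.
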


\subsection{Eigenvalues of $\AA.$}

Let $\lb \in \cc^*$ and $U\neq0, U=(u,v,y,z,\eta,\gamma) \in D(\AA)$ such that  
\be \label {eig1} \AA U= \lb U.\ee

Then $\eta=v(1),\gamma=z(1)$ and $(u,v,y,z)$ is solution of 
$$
\left\{
\begin{array}{l}
v(x)=\lb u(x), x \in (0;1), \\
u_{xx}(x)+y_x(x)=\lambda v(x), x \in (0;1), \\
z(x)=\lambda y(x), x \in (0;1), \\
y_{xx}(x)-bu_x(x)-by(x)=\lambda z(x), x \in (0;1), \\
u(0)=0,\\
y(0)=0,\\
\lb^2u(1)+k_1(u_x(1)+y(1))+k_2\lb u(1)=0,\\
\lb^2y(1)+k_3y_x(1)+k_4\lb y(1)=0.\\
\end{array}
\right. 
$$
Eliminating $v$ and $z$ implies that solving (\ref{eig1}) is equivalent to solving:  

\be \label{syseig}
\left\{
\begin{array}{ll}
(i) &(u_{xx}+y_x-\lambda ^2 u)(x)=0, x \in (0;1),  \\
(ii) &(y_{xx}-bu_x-by -\lambda ^2 y)(x)=0, x \in (0;1), \\
(iii) &u(0)=0,\\
(iv) &y(0)=0,\\
(v)&\lb^2u(1)+k_1(u_x(1)+y(1))+k_2\lb u(1)=0,\\
(vi) &\lb^2y(1)+k_3y_x(1)+k_4\lb y(1)=0.\\
\end{array}
\right. 
\ee

From $(i)$ and $(ii)$, it follows that $u$ is solution of 

\be u_x^{(4)}(x) - 2 \lb^2 u_x^{(2)}(x) + (\lb^2 + b) \lb^2 u(x) = 0
\ee

(cf. (\ref{ord1}) with $a=1$ and $\lb$ replaced by $(-i \lb)$). \\
\\
Denoting by $t_1$, $t_2$, $t_3$ and $t_4$ the solutions of the characteristic equation $r^4 - 2 \lb^2 r^2 + \lb^2 (\lb^2 + b)=0$, i.e.

\be\label{ti} t_1(\lb)=t_1=\sqrt{\lb} \sqrt{i \sqrt{b}+\lb}, t_2=-t_1, t_3(\lb)=t_3=\sqrt{\lb} \sqrt{-i \sqrt{b}+\lb}, t_4=-t_3,\ee the general solution of $(i)$ and $(ii)$ is proved to be given by 
\be \label{fond1}u(x)=\dsum_{i=1}^{4} c_i e^{t_i x },\;\;y(x)=\dsum_{i=1}^{4} c_i d_i e^{t_i x }, \ee
where $c_i \in \C,i=1,...4$ and 
\be \label{di} d_1= \dfrac{\lb^2-t_1^2}{t_1},d_2=\dfrac{-\lb^2+t_1^2}{t_1},d_3 =\dfrac{\lb^2-t_3^2}{t_3},d_4=\dfrac{-\lb^2+t_3^2}{t_3}.\ee 
The values for $d_i, i = 1, \ldots, 4$ come from $(i)$, using the expression for $u$ given by $(\ref{fond1})$. \\ 
\\
Note that (\ref{eig1}) and (\ref{e27}) imply $\Re(\lb) \leq 0$. In the proof of Theorem \ref{th21}, the absence of purely imaginary eigenvalues is proved. Thus $\Re(\lb) < 0$ and $t_1$ does not vanish nor $t_3$. The coefficients $d_1$, $d_2$, $d_3$ and $d_4$ are well defined. \\
\\
Therefore the boundary conditions $(iii)- (vi)$ are equivalent  to the system 
$$\left(\begin{array}{llll}
1 &1 &1&1 \\ 
g_{1}(t_1)&g_{1}(t_2)&g_{1}(t_3)&g_{1}(t_4)\\
\lb^2 e^{t_1}g_{2}(t_1)&\lb^2 e^{t_2}g_{2}(t_2)&\lb^2 e^{t_3}g_{2}(t_3)&\lb^2 e^{t_4}g_{2}(t_4)\\
\lb^2 e^{t_1}g_{3}(t_1)&\lb^2 e^{t_2}g_{3}(t_2)&\lb^2 e^{t_3}g_{3}(t_3)&\lb^2 e^{t_4}g_{3}(t_4)\\
\end{array} \right) \left ( \begin{array}{l}
c_1\\
c_2\\
c_3\\
c_4
\end{array}
\right)=0,
$$
where  
\beq \label{g1} g_1(t)&=&-t+\dfrac{\lambda ^2}{t},\\
\label{g2}  g_2(t)&=& \dfrac{k_2 t+(k_1+t) \lambda }{\lb t}, \\
\label{g3} g_3(t)&=&\dfrac{\left(-t^2+\lambda ^2\right) (k_3 t+\lambda  (k_4+\lambda ))}{  \lb^2 t}.
\eeq
Multiplying the third and fourth lines of the previous system by $\dfrac{1}{\lb^2},$ this one is equivalent to   
\be \label{sys1} 
\left(\begin{array}{llll}
1 &1 &1&1 \\ 
g_{1}(t_1)&g_{1}(t_2)&g_{1}(t_3)&g_{1}(t_4)\\
 e^{t_1}g_{2}(t_1)& e^{t_2}g_{2}(t_2)&e^{t_3}g_{2}(t_3)& e^{t_4}g_{2}(t_4)\\
 e^{t_1}g_{3}(t_1)& e^{t_2}g_{3}(t_2)& e^{t_3}g_{3}(t_3)& e^{t_4}g_{3}(t_4)\\
\end{array} \right) \left ( \begin{array}{l}
c_1\\
c_2\\
c_3\\
c_4
\end{array}
\right)=0.
\ee 
Let $M(\lb)$ be the matrix of the previous system and $C=(c_1,c_2,c_3,c_4)^t,$ then we deduce that 
$\lb\in\C\; ( \Re(\lb)<0)$ is an eigenvalue of $\AA$ if and only if $\lb$ is solution  of the characteristic equation 

\be\label{char} \det(M(\lb)) = 0 \Leftrightarrow f(\lb)= 0, \mbox{with} \quad f(\lb) := - \dfrac{1}{16b} \det(M(\lb)).\ee 

(The division by ($-16b$) simplifies the expressions calculated in next subsection for the asymptotic analysis.) \\ 
If $\lb$ is an eigenvalue of $\AA,$ an associated eigenvector has the form 

$$U=(u, \lb u, y ,\lb y,\lb u(1), \lb y(1)),$$ 

and is given by $C$ a nontrivial solution of (\ref{sys1}) and formulas (\ref{fond1})-(\ref{di}). Moreover the geometric multiplicity of $\lb$ is equal to the dimension of the kernel of $M(\lb).$ \\
\\
Note that the expressions of $g_2$ and $g_3$ depend on the values of $k_2$ and $k_4$. Thus the eigenvalues and eigenvectors of $\AA_0$ are different from those of $\AA$.


\subsection{Asymptotic analysis}

In this part we study the asymptotic behaviour of the large eigenvalues which are proved to lie in the strip 
$${\cal B}=\lbrace\lb \in \C: -\al \leq \Re(\lb)<0\rbrace,$$  where $\al>0$ is fixed and chosen large enough. \\
The large eigenvalues are also proved to be simple and the asymptotic expansions $(\ref{dl1})$ and $(\ref{dl2})$ are established. 

We first start by:  
\blem \label{asympexp0}(Asymptotic behaviour of the characteristic equation)\\
There exists $\al>0$ such that the eigenvalues of $\AA$ are in the strip
$${\cal B}=\lbrace \lb \in \C: -\al \leq \Re(\lb)<0 \rbrace.$$ 
Moreover the characteristic equation admits the following expansion 
\be  \label{f} f(\lb)=f_0(\lb)+\dfrac{f_1(\lb)}{\lb} +\dfrac{f_2(\lb)}{\lb^2}+\dfrac{f_3(\lb)}{\lb^3}+\go{4},\ee
where $f_i,i=0,...,3$ is a bounded function on ${\cal B}$ given by (\ref{fi}) below. 
\elem
\begin{proof} First, if $\lb$ is an eigenvalue of the operator $\AA$ associated to the normalized eigenvector $U$, from (\ref{e27}), $0 > \Re(\lb) = - \dfrac{k_2}{k_1} |\eta|^2 - \dfrac{k_4}{k_3} |\gamma|^2 \geq - k_2 - k_4$, since $\dfrac{1}{k_1} \cdot |\eta|^2$ and $\dfrac{1}{k_3} \cdot |\gamma|^2$ are both smaller than $\| U \|_{\mathcal{H}}^2 = 1$. Hence the existence of $\al$. \\
Furthermore $e^{t_i},i=1...,4$ is bounded as $|\lb| \longrightarrow \infty,$ where $t_i=t_i(\lb),i=1,...,4$ is given by (\ref{ti}). \\
\\
By Taylor series it holds

\beq \label{t1}  t_1=\lambda+ \frac{i \sqrt{b}}{2}+\frac{b}{8 \lambda }-\frac{i b^{\frac{3}{2}}}{16 \lambda ^2}+\go{3}, \eeq \\
\beq \label{t3} t_3=\lambda-\frac{i \sqrt{b}}{2}+\frac{b}{8 \lambda }+\frac{i b^{\frac{3}{2}}}{16 \lambda ^2}+\go{3}.
\eeq

Inserting (\ref{t1}) and (\ref{t3}) into (\ref{sys1}) and using Taylor series, after long calculations we get 
 
$$M(\lb)=\tilde{M}(\lb)+\go{3},$$ 

where $\tilde{M}(\lb)$ is a matrix which only contains terms of order $1,\dfrac{1}{\lb}$ or $\dfrac{1}{\lb^2}.$ Computing the determinant of $\tilde{M}(\lb)$ and keeping only the terms of order less than or equal to $\dfrac{1}{\lb^2},$ we get after lengthy  calculations

\be  \label{fbis} f(\lb)=f_0(\lb)+\dfrac{f_1(\lb)}{\lb} +\dfrac{f_2(\lb)}{\lb^2}+\dfrac{f_3(\lb)}{\lb^3}+\go{4},\ee

where $f_i,i=0,...,3$ is a bounded function given by

\beqs \label{fi}
f_0(\lb)&=& \dfrac{1}{4} \,e^{-t_1-t_3} (e^{t_1+t_3}-1)^2,\\
f1(\lb)&=& - \dfrac{1}{4} \left(2(k_2+k_4)-e^{t_1+t_3}(k_1+k_2+k_3+k_4)+e^{-t_1-t_3}(k_1+k_3-k_2-k_4) \right), \\
f_2(\lb)&=& - \dfrac{1}{16} \lbrace-4  (b+2 k_1 k_3-2 k_2 k_4)\\
&+&
(3 b-4 k_1 k_3-4 k_2 k_3-4 k_1 k_4-4 k_2 k_4)e^{t_1+t_3}\\&+&
(3 b-4 k_1 k_3+4 k_2 k_3+4 k_1 k_4-4 k_2 k_4)e^{-t_1-t_3}\\&+&
(-b+2 i \sqrt{b} k_1-2 i \sqrt{b} k_3)e^{t_1-t_3}\\&+&
(-b-2 i \sqrt{b} k_1+2 i \sqrt{b} k_3)e^{-t_1+t_3}\rbrace,\\
f_3(\lb)&=&- \dfrac{1}{16} \lbrace
-4 b (k_2+k_4)\\
&+&\frac{1}{2} b (7 k_1+6 k_2+3 k_3+6 k_4)e^{t_1+t_3}\\
&+&-\frac{1}{2} b (7 k_1-6 k_2+3 k_3-6 k_4)e^{-t_1-t_3}\\
&+&(-b k_2-2 i \sqrt{b} k_2 k_3-b k_4+2 i \sqrt{b} k_1 k_4)e^{t_1-t_3}\\
&+&(-b k_2+2 i \sqrt{b} k_2 k_3-b k_4-2 i \sqrt{b} k_1 k_4)e^{-t_1+t_3}\rbrace.\\
\eeqs
 \end{proof}
\blem \label{asympexp1} (Asymptotic behaviour of the large eigenvalues of $\AA$) \\
The large eigenvalues of $\AA$ can be split into two families $\left( \lb _k^j \right)_{k\in\Z , |k|\geq k_0}$, $j=1,2,$ ($k_0 \in \N,$ chosen large enough.) The following asymptotic expansions hold:

\be\label{dl0} \lb_k^1=i k\pi +o(1),\;\; \lb_k^2=i k\pi +o(1).\ee

Either $\lb_k^1=\lb_k^2$ and this root is of order 2, or $\lb_k^1\neq \lb_k^2$ and these two roots are simple. 
\elem

\begin{proof} 
The multiplicity of the roots of $f_0$ given by (\ref{fi}) is two and $\lb$ is a root of $f_0$ if and only if 

$$\exists k \in \Z, (t_1+t_3)(\lb)=2i k\pi.$$

Since $(t_1+t_3)(\lb)= 2\lb + \dfrac{b}{4\lb} + o \left( \dfrac{1}{\lb} \right),$ we deduce that, for each $k\in \Z,$ with $|k|$ large enough, corresponds a double root of $f_0,$ denoted by $\lb_k^0$ which satisfies   
$$\lb_k^0=i k\pi+\goko. $$
We will now use Rouch\'e's theorem. Let $B_k=B(i k\pi,r_k)$ be the ball of centrum $i k\pi$ and radius 
$r_k=\dfrac{1}{k^{1/4}}$ and $\lb \in \partial B_k$ (i.e $\lb= i k\pi +r_ke^{i\theta},\; \theta\in [0,2 \pi]$).
Then we successively have:

$$
\begin{array}{lll}
(t_1+t_3)(\lb)&=&2ik\pi+2 r_ke^{i\theta}+\goko, \\
e^{(t_1+t_3)(\lb)}&=&e^{2 r_ke^{i\theta}+\goko}\\
&=& 1 + 2 r_ke^{i\theta}+O(r_k^2),
\end{array}
$$
and 

$$
\begin{array}{lll}
f_0(\lb)&=&(1/4)(1 - 2 r_ke^{i\theta}+O(r_k^2))(2 r_ke^{i\theta}+ O(r_k^2))^2  \\
&=&(1/4)(1 - 2 r_ke^{i\theta}+O(r_k^2))(4 r_k^2 e^{2i\theta} + O(r_k^3))\\
&=&r_k^2 e^{2 i\theta}+O(r_k^3).
\end{array}
$$
It follows that there exists a positive constant $c$ such that 
$$\forall \lb\in \partial B_k,\; |f_0(\lb)|\geq c r_k^2 =\dfrac{c}{\sqrt{k}}.$$ 

Then we deduce from (\ref{f}) that $|f(\lb)-f_0(\lb)| = O(\dfrac{1}{\lb}) = \goko$. It follows that, for $|k|$ large enough 
$$\forall \lb \in B_k ,\; |f(\lb)-f_0(\lb)|<|f_0(\lb) |,$$
hence we get the result.
\end{proof}

\brk Since the imaginary axis is an asymptote  for the spectrum of $\AA,$ then System (\ref{sys1}) is not uniformly  stable.
\erk
\brk Obviously the previous asymptotic analysis of the spectrum is not necessary to deduce that System (\ref{sys1}) is not uniformly  stable. Indeed, using the compact perturbation result of Russell (see \cite{russell:75}), we directly see that the dissipative system (\ref{sys1}) is not uniformly stable.  
\erk

More information concerning the asymptotic behavior of the spectrum of $\AA$ is given by:

\bprop \label{asympexp2} (Asymptotic expansions for the eigenvalues of $\AA$ and $\AA_0$) \\
Assume Condition
$${\bf (C_1):}\;\;  k_1 \neq k_3 \mbox{ or }  \sqrt{b} \neq 2 k \pi, k \in \N^*.$$  

Then the large eigenvalues of the dissipative operator $\AA$ are simple and can be split into two families $\left( \lb _k^j \right)_{k\in\Z , |k|\geq k_0},j=1,2,$ ($k_0 \in \N,$ chosen large enough.) \\
\\
Moreover, we have the following asymptotic expansions for the eigenvalues of $\AA$:

\beq 
\label{dl1}\lb_k^1= i k \pi +i \dfrac{\al_1}{k} -\dfrac{\beta_1}{k^2}+\pok{2},  \\        
\label{dl2}\lb_k^2= i k \pi +i \dfrac{\al_2}{k} -\dfrac{\beta_2}{k^2}+\pok{2},
\eeq
where $\alpha_j\in \R,\; \beta_j>0,j=1,2.$ \\
\\
If Condition ${\bf (C_1)}$ above is still assumed, the large eigenvalues of the conservative operator $\AA_0$ are simple and can be split into two families $\left( \lb _k^{j,0} \right)_{k\in\Z , |k|\geq k^0_0},j=1,2,$ ($k^0_0 \in \N,$ chosen large enough) with the following asymptotic expansions:

\beq 
\label{dl01}\lb_k^{1,0}= i k \pi +i \dfrac{\al_1}{k} +\pok{2},  \\        
\label{dl02}\lb_k^{2,0}= i k \pi +i \dfrac{\al_2}{k}+\pok{2},
\eeq
with the same $\alpha_j$ as above.  
\eprop  

(cf. Figure $1$ of Section \ref{num}.)

\brk The explicit values for $\alpha_1$ and $\alpha_2$ are given by $(\ref{defalpha})$, $(\ref{defgamma1})$ and $(\ref{defgamma2})$. They only depend on the values of $b$, $k_1$ and $k_3$. As for $\beta_j$, it is defined by $\beta_j:=\dfrac{\om_2^j}{\om_1^j}$, $j=1;2$, with $\om_1^j$ and $\om_2^j$   given by $(\ref{om1})$ and $(\ref{om2})$. 
\erk

\begin{proof}

{\bf Step 1.} \\
Let $\lb_k=\lb_k^j,$ with $j=1$ or $j=2.$ From (\ref{dl1}), it follows $\lb_k = ik\pi+\eps_k,$ where $\eps_k=o(1).$ 

Using (\ref{t1}) and (\ref{t3}) leads to:

$$t_1+t_3=2 i k \pi+ 2 \eps_k -\dfrac{i b}{4 k \pi }+o(\eps_k) +\pok{2}+o \left( \dfrac{\eps_k}{k} \right),$$

which implies: 

\beq \label{z1} e^{t_1+t_3}=1-\dfrac{i b}{4 k \pi }-
\dfrac{b^2}{32 k^2 \pi ^2}-\dfrac{i b \eps_k}{2 k \pi }+2\eps_k+o(\eps_k) +\pok{2}+o \left( \dfrac{\eps_k}{k} \right),\\
\label{z2}
e^{-t_1-t_3}=1+\dfrac{i b}{4 k \pi }-
\dfrac{b^2}{32 k^2 \pi ^2}+\dfrac{i b \eps_k}{2 k \pi }-2\eps_k+o(\eps_k) +\pok{2}+o \left( \dfrac{\eps_k}{k} \right). 
\eeq

Similarly it holds 

$$t_1-t_3=i \sqrt{b}+\frac{i b^{3/2}}{8 k^2 \pi ^2}+\pok{2},$$

and we deduce that 

\beq \label{z3} e^{t_1-t_3}=e^{i \sqrt{b}}+\frac{i b^{3/2} e^{i \sqrt{b}}}{8 k^2 \pi ^2}+\pok{2},\\
\label{z4} e^{-t_1+t_3}=e^{-i \sqrt{b}}-\frac{i b^{3/2} e^{i \sqrt{b}}}{8 k^2 \pi ^2}+\pok{2}.\eeq

Using (\ref{f}), inserting (\ref{z1})-(\ref{z4}) into $f(\lb_k)$ and keeping only the terms greater than or equal to $O(\dfrac{1}{k^2})$, we obtain after  calculations 

\beq  \label{e30}
 f(\lb_k)= \eps_k^2 - i \gamma_1 \dfrac{\eps_k}{k} - \gamma_2 \dfrac{1}{k^2} + \pok{2} + o(\eps_k^2) + o(\dfrac{\eps_k}{k})=0,
\eeq 

where  

\beq  \label{defgamma1} \gamma_1=\dfrac{b+4 (k_1+k_3)}{4 \pi }, \eeq
\beq  \label{defgamma2} \gamma_2=\dfrac{-8 b+b^2+8 b k_1+8 b k_3+64 k_1 k_3+8 b \cos(\sqrt{b})+16 \sqrt{b} (k_1-k_3) \sin(\sqrt{b})}{64 \pi ^2}.\eeq

Multiplying (\ref{e30}) by $k^2$ leads to:

$$(k\eps_k)^2 -i \gamma_1 (k\eps_k) - \gamma_2 + o(1) + o(k \eps_k) + o(k^2 \eps_k^2) = 0.$$

Thus $k\eps_k$ is bounded and  

$$(k\eps_k)^2 -i \gamma_1 (k\eps_k) - \gamma_2 + o(1) = 0.$$

The previous equation has two solutions 

$$ k\eps_k =\dfrac{i}{2 }(\gamma_1-\sqrt{\gamma_1^2-4 \gamma_2})+o(1)\; \mbox {or } k\eps_k =\dfrac{i}{2 }(\gamma_1+\sqrt{\gamma_1^2-4 \gamma_2})+o(1).$$ 

Denoting by

\beq  \label{defalpha} \al_1=\dfrac{\gamma_1-\sqrt{\gamma_1^2-4 \gamma_2}}{2}\; \mbox{ and }\;\;
\al_2=\dfrac{\gamma_1+\sqrt{\gamma_1^2-4 \gamma_2}}{2},
\eeq

it holds:

$$ \eps_k = i\dfrac{\al_1}{k} +\poko\; \mbox{ or } \;\; \eps_k = i\dfrac{\al_2}{k} +\poko.$$

Note that, if Condition $({\bf C_1})$ holds, $\al_1$ and $\al_2$ are real numbers and $\al_1 \neq \al_2.$ 
Indeed $\gamma_1 \in \R$ and it holds

$$ \gamma_1^2-4 \gamma_2=\dfrac{b+2 (k_1-k_3)^2-b \cos(\sqrt{b}) -2 \sqrt{b} (k_1-k_3) \sin(\sqrt{b})}{2 \pi^2} $$
$$= \dfrac{1}{2 \pi^2} \left[2 \left(k_1 - k_3 - \dfrac{1}{2} \sqrt{b} \sin(\sqrt{b}) \right)^2 + \dfrac{b}{4} \left( (\cos(\sqrt{b}))^2 - 4 (\cos(\sqrt{b})) + 3) \right) \right] \geq 0$$

for all $k_1>0,k_3>0,b>0$. Thus $\gamma_1^2-4 \gamma_2=0,$ if and only if $k_1=k_3,$ and $\sqrt{b}=2 k \pi,k \in \N^*.$ \\
\\
Now it must be proved that near $i k \pi$, there are exactly two distinct roots, for $|k|$ great enough. For that purpose we consider $\Gamma_k$ the disk of center $z_k^0=i k \pi+i\dfrac{\al_1}{k}$ and radius $r_k=\dfrac{1}{2}\dfrac{|\al_1-\al_2|}{k}$  and the polynomial  $p_k$  defined by 

$$ p_k(z)= (z-ik\pi)^2-i\gamma_1 \dfrac{z-ik\pi}{k}-\gamma_2 \dfrac{1}{k^2}.$$

The roots of $p_k$ are $z_k^0$ and $i k \pi+i\dfrac{\al_2}{k}$ (it holds $\al_l^2 - \gamma_1 \al_l + \gamma_2 = 0$ since $\al_1 + \al_2 = \gamma_1$ and $\al_1 \al_2 = \gamma_2$). But $i k \pi+i\dfrac{\al_2}{k}$ does not belong to $\Gamma_k,$ if $|k|$ is large enough. 
Let $z=z_k^0+\dfrac{1}{2 k} (\al_2-\al_1)e^{i\theta}, \theta \in [0,2\pi]$ any element of   $\partial\Gamma_k.$ 
Then $p_k(z)$ is proved to be:

$$p_k(z)=-\dfrac{e^{i \theta} \left(e^{i \theta}-2\right) \left(\gamma_1^2-4 \gamma_2\right)}{4 k^2} ,$$

thus there exists a positive constant $c$ independent of $k$ such that 
$$|p_k(z)|\geq \dfrac{c}{k^2},\; \forall z\in \partial \Gamma_k.$$

On the other hand, using $(\ref{e30})$ we get $|f(z)-p_k(z)|=\pok{2}.$ Therefore, Rouch\'e's theorem implies that $f$ has only one root in $\Gamma_k,$ if $k$ is large enough. Finally, we have proved that the large eigenvalues of $\AA$ are simple and can be split into two families with the following expansions: 
$$\lb_k^1=ik\pi+ i \dfrac{\al_1}{k}+\poko,\; \lb_k^2=ik\pi+ i\dfrac{\al_2}{k}+\poko. $$

Note that the eigenvalues of the conservative operator $\AA_0$ have the same asymptotic expansions, since $\al_1$ and $\al_2$ are independent of the values of $k_2$ and $k_4$. \\  
\\

{\bf Step 2.}\\ Since for $j=1,2,\al_j \in \R,$ we need one more term in the expansion of $\lb_k^j,j=1,2.$ 

From Step 1, the expansion for $j=1$ or $j=2$ is: 

$$\lb_k^j = ik \pi +i \dfrac{\al_j}{k}+\dfrac{\eps_k^j}{k},$$ where $\eps_{k}^j=o(1).$ 

Using (\ref{f}), Taylor series and simplification in the term of order $\dfrac{1}{k^2}$ coming from Step 1, we get after a long calculation

\be \label{dl3} f(ik \pi+ i \dfrac{\al_j}{k}+\dfrac{\eps_k^j}{k})=\dfrac{1}{k^2} (\om_1^j \eps_{k}^j + (\eps_{k}^j)^2)+\om_2^j \dfrac{1}{k^3}+\pok{3}=0
\ee

where  $\om_l^j \in i\R, l=1,2$ and is given by 
\beq \label{om1}  \om_1^j = - \dfrac{i(b+4 k_1+4 k_3-8  \al_j \pi )}{4\pi }=\mp i \sqrt{\gamma_1^2-4  \gamma_2}, \eeq
\beq \label{om2} \om_2^j=\dfrac{i}{8 \pi^3}  ( \gamma_3-8\pi  (k_1k_2+k_3k_4) \al_j),
\eeq 
$j=1,2,$ where 

\be \label{defgamma3} \gamma_3=b( k_1 k_2+k_3k_4)+8 k_1 k_3(k_2+k_4)+2 \sqrt{b} (k_1k_2-k_3k_4)\sin(\sqrt{b}). \ee 

Since we assume $({\bf C_1})$ then $\om_1^j\neq 0$ (see the remark just below (\ref{defgamma2})) and we deduce from (\ref{dl2}) that $\eps_k^j=-\dfrac{\om_2^j}{\om_1^j}\dfrac{1}{k} +o \left( \dfrac{1}{k} \right).$ Setting  
 $\beta_j=\dfrac{\om_2^j}{\om_1^j},$ then it holds $\beta_j \in \R$ and (\ref{dl1}) holds. Since all the eigenvalues of $\AA$ are on the left of the imaginary axis, necessarily $\beta_j\geq 0.$ \\
 \\
Note that, if $k_2=k_4=0$ (associated conservative operator $\mathcal{A}_0$), $\gamma_3=0$ and thus, $\om_2^j$ and $\beta_j$ vanish as well.  \\ 
 \\ 
Now, if $(k_2,k_4) \neq (0,0)$ (dissipative operator $\mathcal{A}$), $\beta_j\neq 0, j=1,2$ as it is proved below. \\
 \\

{\bf Step 3.}\\ Assume that $(k_2,k_4) \neq (0,0)$ and $\beta_j=  0, j=1,2.$ Then $\om_2^j=0, j=1,2,$ thus 
$$\alpha_j =\dfrac{\gamma_3 }{8\pi  (k_1k_2+k_3k_4)}, j=1,2.$$
But, since $\alpha_1+ \alpha_2 = \gamma_1$ and $\alpha_1 \cdot
\alpha_2 = \gamma_2$, it holds:
$$\alpha_j ^2 -\gamma_1 \alpha_j+\gamma_2=0, j=1,2.$$

It follows 

$$\left( \dfrac{\gamma_3 }{8\pi  (k_1k_2+k_3k_4)} \right)^2-\gamma_1 \dfrac{\gamma_3 }{8\pi  (k_1k_2+k_3k_4)}+\gamma_2=0.$$

We multiply the previous identity by $16 (k_1 k_2 + k_3 k_4)^2 \pi^2$ and use $(\ref{defgamma2})$ and $(\ref{defgamma3})$ to get: 

$$\gamma_4+\gamma_5 b \cos(\sqrt{b})+\gamma_6 b \sin^2(\sqrt{b})+\gamma_7 \sqrt{b} \sin(\sqrt{b})=0,$$
where $\gamma_4=-2 b (k_1 k_2 + k_3 k_4)^2 - 16 k_1 k_2 k_3 k_4 (k_1 - k_3)^2,\gamma_5=2  (k_1 k_2 + k_3 k_4)^2,$

$\gamma_6 =(k_1 k_2 - k_3 k_4)^2$ and $\gamma_7 =16 k_1 k_2 (k_1 - k_3) k_3 k_4.$ \\
\\
Now, using the fact that $\gamma_1^2-4\gamma_2>0$  or equivalently  $2\pi^2(\gamma_1^2-4\gamma_2)>0$ (which is true if and only if Condition ${\bf (C_1)}$), it holds
 
$$ b + 2 k_1^2 - 4 k_1 k_3 + 2 k_3^2 - b \cos(\sqrt{b}) - 2 \sqrt{b} (k_1 - k_3)  \sin(\sqrt{b}) >0.$$

Thus, using the definition of $\gamma_7$,  

$$\gamma_7 \sqrt{b} \sin(\sqrt{b})<8 k_1k_2k_3k_4( b + 2 (k_1 - k_3)^2 - b \cos(\sqrt{b})).$$

We get after simplifications 

$$\begin{array}{lll}
0&=&\gamma_4+\gamma_5 b \cos(\sqrt{b})+\gamma_6 b \sin^2(\sqrt{b})+\gamma_7 \sqrt{b} \sin(\sqrt{b})\\
&<& (2(k_1 k_2 + k_3 k_4)^2 - 8 k_1 k_2 k_3 k_4) (-b + b \cos(\sqrt{b}) + (k_1 k_2 - k_3 k_4)^2 b \sin^2(\sqrt{b}))\\
&<& b (k_1 k_2 - k_3 k_4)^2 (-2 + 2 \cos(\sqrt{b}) + \sin^2(\sqrt{b}))\\
&=& -4 b (k_1 k_2 - k_3 k_4)^2 \left(\sin(\dfrac{\sqrt{b}}{2}) \right)^4.
\end{array}$$

Since this inequality never holds, the assumption $\beta_j=0, j=1;2$ does not hold either. 

\end{proof}

Now, if Condition ${\bf (C_1)}$ does not hold, the calculations are different (and long). The details are not given here. The results are given without proofs.

\bprop \label{asympexp3} (Asymptotic expansions for the eigenvalues of $\AA$ and $\AA_0$ - particular cases) \\

\begin{enumerate}
\item \bf Case $k_1=k_3,\;\; k_2 \neq k_4  \;\; b=4 p^2 \pi^2, \;\;p\in \N^*.$ \it \\
The large eigenvalues of the dissipative operator $\AA$ are simple and can be split into two families $\left( \lb _k^j \right)_{k\in\Z , |k|\geq k_0},j=1,2,$ ($k_0 \in \N,$ chosen large enough.) Moreover they satisfy the following asymptotic expansions: 

\be\label{cp1} \lb_k^1=i k \pi+\dfrac{i (2 k_1+p^2 \pi ^2)}{2 k \pi }-\dfrac{k_1 k_2}{k^2 \pi ^2}+ o \left( \dfrac{1}{k^2} \right),
\ee
\be\label{cp2} \lb_k^2=i k \pi+\dfrac{i (2 k_1+p^2 \pi ^2)}{2 k \pi }-\dfrac{k_1 k_4}{k^2 \pi ^2}+ o \left( \dfrac{1}{k^2} \right).
\ee

(cf. the table and Figure $2$ of Section \ref{num}.)

\item \bf Case $k_1=k_3,\;\; k_2=k_4 \neq 0,  \;\; b=4 p^2 \pi^2, \;\;p\in \N^*.$ \it \\
The large eigenvalues of the dissipative operator $\AA$ can be split into two families $\left( \lb _k^j \right)_{k\in\Z , |k|\geq k_0}$, $j=1,2,$ ($k_0 \in \N,$ chosen large enough.) Moreover they satisfy the following asymptotic expansions: 

$$\lb_{k}^j=i k \pi +\frac{i \left(2 k_1+p^2 \pi ^2\right)}{2 k \pi }-\dfrac{k_1k_2}{k^2 \pi ^2}+\dfrac{ i (a_{3,j}-24k_1k_2^2)}{24 k^3 \pi ^3}
+o \left( \dfrac{1}{k^3} \right),$$
where $a_{3,j} \in \R,j=1,2$ are given below.

\item \bf Case $k_1=k_3,\;\; k_2= k_4=0 , \;\; b=4 p^2 \pi^2, \;\;p\in \N^*.$ \it \\
The large eigenvalues of the conservative operator $\AA_0$ can be split into two families $\left( \lb _k^{j,0} \right)_{k\in\Z , |k|\geq k^0_0},j=1,2,$ ($k^0_0 \in \N,$ chosen large enough) with the following asymptotic expansions:

$$\lb_{k}^{j,0} = i k \pi +\frac{i \left(2 k_1+p^2 \pi ^2\right)}{2 k \pi }+\frac{i a_{3,j}}{24 k^3 \pi ^3}
+o \left( \dfrac{1}{k^3} \right),$$
where $a_{3,j} \in \R,j=1,2$ are given below

$$a_{3,1} =-24 k_1^2-8 k_1^3-36 k_1 p^2 \pi ^2+9 p^4 \pi ^4-12 p \pi  \sqrt{4 k_1^4-43 k_1^2 p^2 \pi ^2-4 k_1 p^4 \pi ^4+p^6 \pi ^6},$$

$$a_{3,2} =-24 k_1^2-8 k_1^3-36 k_1 p^2 \pi ^2+9 p^4 \pi ^4+12 p \pi  \sqrt{4 k_1^4-43 k_1^2 p^2 \pi ^2-4 k_1 p^4 \pi ^4+p^6 \pi ^6}.$$

\rm Note that, if $4 k_1^4-43 k_1^2 p^2 \pi ^2-4 k_1 p^4 \pi ^4+p^6 \pi ^6 \neq 0$ then $\lb_{k}^{1,0} \neq \lb_{k}^{2,0}$ for $k$ large enough. Idem for $\lb_{k}^{1}$ and $\lb_{k}^{2}$ of the previous case.
\end{enumerate}
\end{proposition}


\section{Riesz basis} 

\label{secriesz}

\noindent In this section, it is proved that the system ${\cal F}$ of generalized eigenvectors of the dissipative operator $\mathcal{A}$ (introduced in Theorem \ref{ppr}) forms a Riesz basis of $\mathcal{H}$. To this end, we use Theorem 1.2.10 of \cite{abd} which is a rewriting of Guo's version of Bari Theorem with another proof (see \cite{Guo}). \\
For the sake of completeness, Theorem 1.2.10 of \cite{abd} is recalled~:

\begin{theorem} Let $\mathcal{A}$ be a densely defined operator in a Hilbert space $\mathcal{H}$ with compact resolvent. Let $\{ \phi_n \}_{n=1}^{\infty}$ be a Riesz basis of $\mathcal{H}$. If there are two integers $N_1$, $N_2 \geq 0$ and a sequence of generalized eigenvectors $\{ \psi_n \}_{n = N_1 + 1}^{\infty}$ of $\mathcal{A}$ such that 

$$\sum_{n=1}^{\infty} \| \phi_{n+N_2} - \psi_{n+N_1} \|_2^2 < \infty,$$

then the set of generalized eigenvectors (or root vectors) of $\mathcal{A}$, $\{ \psi_n \}_{n = 1}^{\infty}$ forms a Riesz basis of $\mathcal{H}$. 
\end{theorem}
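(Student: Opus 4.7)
The idea is to deduce the statement from the classical Bari--Markus perturbation theorem, then handle the index mismatch by a finite-rank completion argument that exploits the compact resolvent assumption.

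First, I would reindex the tails: set $e_n := \phi_{n+N_2}$ and $f_n := \psi_{n+N_1}$ for $n \geq 1$, so the hypothesis reads $\sum_n \|e_n - f_n\|^2 < \infty$. Deleting the first $N_2$ vectors from a Riesz basis yields a Riesz basis of a closed subspace $\mathcal{H}_0 \subset \mathcal{H}$ of codimension exactly $N_2$; hence $\{e_n\}_{n \geq 1}$ is a Riesz basis of $\mathcal{H}_0$.

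Second, I would apply Bari--Markus. The assignment $T : e_n \mapsto f_n - e_n$ extends to a Hilbert--Schmidt (hence compact) perturbation, so quadratic closeness to a Riesz basis forces $\{f_n\}_{n \geq 1}$ to be a Riesz basis of its closed linear span $\widetilde{\mathcal{H}}$, with $\dim(\mathcal{H}/\widetilde{\mathcal{H}}) < \infty$. Thus the tail $\{\psi_n\}_{n > N_1}$ is a Riesz sequence of finite defect in $\mathcal{H}$.

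Third, I would bring in the operator structure to absorb the first $N_1$ vectors. Because $\mathcal{A}$ has compact resolvent, the spectrum is discrete with finite algebraic multiplicities, and the Riesz spectral projectors $P_\mu$, $P_\nu$ onto the root subspaces attached to distinct eigenvalues $\mu \neq \nu$ satisfy $P_\mu P_\nu = 0$. This makes the full root vector family $\{\psi_n\}_{n \geq 1}$ $\omega$-linearly independent, in particular minimal, so $\{\psi_1, \ldots, \psi_{N_1}\}$ is a linearly independent family transversal to $\widetilde{\mathcal{H}}$. A dimension count matching $N_1$ against the codimension of $\widetilde{\mathcal{H}}$ produced in the previous step then gives $\widetilde{\mathcal{H}} \oplus \mathrm{span}\{\psi_1,\ldots,\psi_{N_1}\} = \mathcal{H}$, and a minimal complete system obtained as a Riesz basis of a finite-codimension closed subspace augmented by finitely many transversal vectors is itself a Riesz basis of the whole space.

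The main obstacle will be the dimension bookkeeping in the third step, namely showing that the enumeration $\{\psi_n\}_{n \geq 1}$ really exhausts \emph{all} root vectors of $\mathcal{A}$ and that the resulting codimension of $\widetilde{\mathcal{H}}$ is exactly $N_1$ rather than only bounded by $N_1 + N_2$. The compactness of the resolvent is essential here: without it, root vectors of a general closed operator need not be complete, and the Bari--Markus defect could in principle exceed what the finitely many extra $\psi_j$ can fill in. Reconciling these counts is the delicate part of the argument.
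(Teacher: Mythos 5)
The paper itself does not prove this statement: it is Theorem~1.2.10 of the thesis \cite{abd} (a rewriting of Guo's version of Bari's theorem \cite{Guo}) and is only recalled ``for the sake of completeness''. So there is no in-paper proof to compare against; your plan must be measured against the argument in those sources, whose general skeleton (a Bari--Markus perturbation step followed by a finite-defect completion by root vectors) you have correctly identified.

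Within that skeleton, however, there are genuine gaps. In Step 2, quadratic closeness of $\{f_n\}_{n\ge 1}$ to the Riesz basis $\{e_n\}_{n\ge 1}$ of $\mathcal{H}_0$ does \emph{not} force $\{f_n\}_{n\ge 1}$ to be a Riesz basis of its closed span: the operator $I+T$ is only Fredholm of index $0$, not invertible, and since the hypothesis gives merely ``a sequence of generalized eigenvectors'' (with no minimality or $\omega$-independence assumed, so repetitions or dependencies among the $f_n$ are not excluded) the family can fail to be minimal. What is true is that for $M$ large enough the further tail $\{f_n\}_{n>M}$ is a Riesz basis of its closed span, because the Hilbert--Schmidt norm of the truncated perturbation is then small and $I+T_M$ is invertible; this already changes the bookkeeping you rely on later. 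More seriously, in Step 3 the vectors $\psi_1,\dots,\psi_{N_1}$ are not data of the theorem: the entire content of the conclusion is that the finite-codimensional defect left by the tail \emph{can} be filled by generalized eigenvectors of $\mathcal{A}$ and that the completed system is a Riesz basis. Your justification --- $P_\mu P_\nu=0$ for distinct eigenvalues giving $\omega$-independence, plus a dimension count matching $N_1$ --- does not carry this: orthogonality of the Riesz projections only separates root vectors attached to \emph{distinct} eigenvalues and says nothing about the independence, or the transversality to $\widetilde{\mathcal{H}}$, of several root vectors belonging to one root subspace; and nothing in the plan shows that the defect subspace is spanned by root vectors of $\mathcal{A}$ at all (as you note yourself, root vectors of an operator with compact resolvent need not be complete). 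That identification of the defect with finitely many root subspaces is precisely the part Guo proves, by showing that only finitely many eigenvalues can contribute root vectors outside the closed span of the tail and that their root spaces are finite dimensional. Deferring it as ``dimension bookkeeping'' leaves the theorem unproved.
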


The family ${\cal F}_0 $ of eigenvectors of the conservative operator $\mathcal{A}_0$ is an orthornormal basis of the Hilbert space ${\caH}.$ Thus, it is enough to show that the eigenfunctions of $\mathcal{A}_0$ associated to the eigenvalues $\lb_{k}^{j,0}\in \sigma_0^2$ and those of the dissipative operator $\mathcal{A}$ associated to the eigenvalues $\lb_{k}^{j}\in\sigma^2$ are quadratically close to one another. 


\begin{theorem} \label{rieszbasis} (Riesz basis for the operator $\mathcal{A}$) \\ 
For any $j \in \{ 1 ; 2 \}$, it holds:
$$\sum_{|k|\geq k_0} \| \phi_k^j - \psi_k^j \|_{\mathcal{H}}^2 < \infty.$$
Thus, the set ${\cal F}$ of generalized eigenvectors of $\mathcal{A}$ forms a Riesz basis of $\mathcal{H}$. 
\end{theorem}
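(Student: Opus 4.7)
The plan is to apply Theorem 1.2.10 of \cite{abd}, with the orthonormal basis $\caF_0$ of $\caH$ from Theorem \ref{ppr}(2) serving as $\{\phi_n\}_{n=1}^\infty$ and the generalized eigenvectors $\caF$ of $\AA$ serving as $\{\psi_n\}_{n=1}^\infty$. The finitely many exceptional vectors in each family (indexed by $I_0$ and $I$) are absorbed into the offsets $N_1,N_2$ of the theorem, so the remaining task is precisely the quadratic-closeness bound displayed in the statement.

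First, I would write both $\phi_k^j$ and $\psi_k^j$ in the explicit form derived in Subsection 3.2: every eigenvector is of the type $U=(u,\lb u,y,\lb y,\lb u(1),\lb y(1))$ with $u(x)=\sum_{i=1}^4 c_i e^{t_i(\lb)x}$ and $y(x)=\sum_{i=1}^4 c_i d_i(\lb) e^{t_i(\lb)x}$, where $(c_1,c_2,c_3,c_4)^t$ spans the one-dimensional null space of $M(\lb)$ at $\lb=\lb_k^j$ for $\psi_k^j$, respectively of $M_0(\lb)$ at $\lb=\lb_k^{j,0}$ for $\phi_k^j$ (where $M_0$ is $M$ with $k_2=k_4=0$). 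Two asymptotic facts would then be exploited. Since the two leading terms of $(\ref{dl1})$--$(\ref{dl2})$ and of $(\ref{dl01})$--$(\ref{dl02})$ coincide, Proposition \ref{asympexp2} gives $\lb_k^j-\lb_k^{j,0}=O(1/k^2)$. On the other hand, a direct computation from $(\ref{g1})$--$(\ref{g3})$ yields $g_2-g_2^0=k_2/\lb$ and $g_3-g_3^0=k_4(\lb^2-t^2)/(\lb t)$, both of size $O(1/\lb)=O(1/k)$ at $t=t_{1,3}$ because $\lb^2-t_{1,3}^2=\mp i\sqrt b\,\lb$. Together this yields $M(\lb_k^j)-M_0(\lb_k^{j,0})=O(1/k)$ entrywise on the relevant asymptotic regime.

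The next step is to convert this matrix-level closeness into closeness of the null spaces. A non-trivial null vector can be extracted as a non-degenerate column of the adjugate $\mathrm{adj}\,M(\lb)$, which depends polynomially on the matrix entries; hence the selected columns of $\mathrm{adj}\,M(\lb_k^j)$ and $\mathrm{adj}\,M_0(\lb_k^{j,0})$ differ by $O(1/k)$. Combined with $t_i(\lb_k^j)-t_i(\lb_k^{j,0})=O(1/k^2)$ and $d_i(\lb_k^j)-d_i(\lb_k^{j,0})=O(1/k^2)$ obtained from $(\ref{t1})$--$(\ref{t3})$ and $(\ref{di})$, and with the uniform boundedness of $|e^{t_i(\lb)}|$ above and below (which follows from $t_1+t_3\sim 2ik\pi$), this produces unnormalised eigenvectors $\tilde\phi_k^j,\tilde\psi_k^j\in\HH$ satisfying $\|\tilde\phi_k^j-\tilde\psi_k^j\|_\HH = O(1/k)\|\tilde\phi_k^j\|_\HH$. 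Renormalising to unit $\HH$-norm and fixing the scalar phase (the normalising constants admit a positive limit, so their ratio is $1+O(1/k)$) preserves the bound $\|\phi_k^j-\psi_k^j\|_\HH=O(1/k)$; summing over $k$ gives $\sum_{|k|\ge k_0}\|\phi_k^j-\psi_k^j\|_\HH^2 = O(\sum 1/k^2)<\infty$, and Theorem 1.2.10 of \cite{abd} then concludes.

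The main obstacle I anticipate is precisely this last reduction: one must verify that a column of $\mathrm{adj}\,M_0(\lb_k^{j,0})$ can be chosen whose norm is bounded below uniformly in $k$, for otherwise the $O(1/k)$ perturbation of the matrix could be swamped by a near-vanishing normalisation constant when computing the null direction. This requires an asymptotic inspection of the four $3\times 3$ cofactors using $(\ref{t1})$--$(\ref{t3})$, and relies on the non-degeneracy built into Condition ${\bf (C_1)}$ (or its substitutes in Proposition \ref{asympexp3}), which are precisely what guarantee simplicity of the large eigenvalues and $\al_1\ne\al_2$.
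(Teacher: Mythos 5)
Your overall strategy (explicit eigenvector asymptotics via the characteristic matrix $M(\lb)$) is genuinely different from the paper's, which never expands the eigenvectors at all: the paper uses the rank--two structure $\AA=\AA_0-k_2\eta e_5-k_4\gamma e_6$ together with the dissipation identity (\ref{e27}) and $\Re(\lb_k^j)=O(1/k^2)$ to get $|\eta_k^j|,|\gamma_k^j|=O(1/k)$, writes $(\AA_0-\lb_k^j)\psi_k^j=k_2\eta_k^j e_5+k_4\gamma_k^j e_6$, projects onto $\{\phi_k^j\}^{\perp}$ and inverts $\AA_0-\lb_k^j$ there; the only delicate contribution comes from the nearby eigenvalue $\lb_k^{j+1,0}$ at distance $O(1/k)$, which is compensated by the multiplier estimates $|\langle e_5,\phi_k^{j+1}\rangle_{\HH}|=O(1/k)$ and $|\langle e_6,\phi_k^{j+1}\rangle_{\HH}|=O(1/k)$. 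That route avoids precisely the degeneracy on which your argument founders.

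The gap in your proposal is the obstacle you yourself flag at the end, and it cannot be resolved the way you hope: no column of $\mathrm{adj}\,M(\lb_k^j)$ is bounded below uniformly in $k$. Along the sequences $\lb_k^j=ik\pi+o(1)$ one has $g_1(t_1),g_1(t_4)\to -i\sqrt b$, $g_1(t_2),g_1(t_3)\to i\sqrt b$, $g_2(t_i)\to 1$, $g_3(t_1),g_3(t_4)\to -i\sqrt b$, $g_3(t_2),g_3(t_3)\to i\sqrt b$, and $e^{t_1},e^{t_4}\to(-1)^k e^{i\sqrt b/2}$, $e^{t_2},e^{t_3}\to(-1)^k e^{-i\sqrt b/2}$; hence columns $1$ and $4$ of $M(\lb)$ coalesce in the limit, as do columns $2$ and $3$. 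The limiting matrix has rank $2$ (this is just the statement that $f_0$ has double roots), so its adjugate vanishes and \emph{every} $3\times3$ cofactor of $M(\lb_k^j)$ tends to $0$ as $|k|\to\infty$. Consequently your entrywise bound $M(\lb_k^j)-M_0(\lb_k^{j,0})=O(1/k)$ is of the \emph{same order} as the adjugate columns themselves: after dividing by the normalising constant (which tends to $0$, not to a positive limit as you assert), an $O(1/k)$ perturbation of the adjugate only yields $\|\phi_k^j-\psi_k^j\|_{\HH}=O(1)$, which is not summable. To rescue the approach you would have to expand the null directions to second order and show that the two $O(1/k)$ leading corrections (the one for $\AA$ at $\lb_k^j$ and the one for $\AA_0$ at $\lb_k^{j,0}$) agree up to $O(1/k^2)$ relative error --- a substantially harder computation that is neither performed nor reduced to anything established in the paper. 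As written, the proof is not complete.
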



\begin{proof}

{\bf Step 1.} \\
Since $\psi_k^j$ lies in $\mathcal{H}$, it has six components (see Section \ref{wellposedness}). Let us write $\psi_k^j:=(u_k^j,v_k^j,y_k^j,z_k^j,\eta_k^j,\gamma_k^j)$ and let us first prove that 

\be \label{etakj} |\eta_k^j| = \goko, \, |\gamma_k^j| = \goko. \ee

From (\ref{e27}), it follows 

\begin{equation}
\Re <\AA \psi_k^j, \psi_k^j>_{\HH} = -\dfrac{k_2}{k_1} \vert \eta_k^j\vert^2 - \dfrac{k_4}{k_3} \vert \gamma_k^j \vert^2.  
\end{equation}

Now, $\Re <\AA \psi_k^j, \psi_k^j>_{\HH}$ is also equal to $\Re (\lb_{k}^{j}) = - \dfrac{\beta_j}{k^2} + \pok2 = \gok2$ due to (\ref{dl1}) and (\ref{dl2}). Hence (\ref{etakj}). \\
\\
{\bf Step 2. Projection.} 
Let $j=1,2$ and $k,\;|k|\geq k_0$ be fixed and denote by $P_k^j$ the orthogonal projection on  $\{\phi_k^j\}^\bot,$  the orthogonal space of the  1-dimensional space directed by $\phi_k^j.$
Clearly there exists $\alpha_k^j$ which can be supposed to satisfy $\alpha_k^j\geq 0$ without loss of generality, such that   
\be \label{proj} \psi_k^j= \alpha_k^j \phi_k^j + R_k^j, \ee
where $R_k^j=P_k^j(\psi_k^j)$ and $R_k^j$ is orthogonal to $\phi_k^j$.   \\
Thus, due to Lemma \ref{techlem} given later, $$1 = \| \psi_k^j \|_{\mathcal{H}}^2 = |\alpha_k^j|^2 \cdot \| \phi_k^j \|_{\mathcal{H}}^2 + \| R_k^j \|_{\mathcal{H}}^2 = |\alpha_k^j|^2 + \gok2.$$ Then, $\exists c_k^j$, real number bounded with respect to $k$, such that $$\alpha_k^j = \sqrt{1- \dfrac{c_k^j}{k^2}} = 1 -  \dfrac{c_k^j}{2 k^2} + \pok2 = 1 + \gok2.$$ 

{\bf Step 3: $\{\phi_k^j\}_{|k|\geq k_0}$ and $\{\psi_k^j\}_{|k|\geq k_0}$ are quadratically close to one another.} \\
Using Step 2, $$\| \phi_k^j - \psi_k^j \|_{\mathcal{H}}^2 = \|(\alpha_k^j - 1) \phi_k^j + R_k^j \|_{\mathcal{H}}^2 = |\alpha_k^j - 1|^2 \cdot \|\phi_k^j \|_{\mathcal{H}}^2 + \| R_k^j \|_{\mathcal{H}}^2 = \gok4 + \gok2 = \gok2.$$
Hence 
$$\sum_{|k|> 0} \| \phi_k^j - \psi_k^j \|_{\mathcal{H}}^2 < \infty. $$
\end{proof}

\blem (Technical Lemma for the proof of Theorem \ref{rieszbasis}) \label{techlem} \\
The vector $R_k^j$, defined in Step 2 of the proof of Theorem \ref{rieszbasis}, satisfies: $\| R_k^j \|_{\mathcal{H}} = \goko$, for $j=1,2$. 
\elem

\begin{proof}
Using (\ref{proj}), it holds, for $j=1;2$:

$$\begin{array}{ll}
(\mathcal{A}_0 - \lb_k^j) \psi_k^j & = (\mathcal{A}_0 - \lb_k^j) (\alpha_k^j \phi_k^j + R_k^j) = \alpha_k^j (\mathcal{A}_0 - \lb_k^j)(\phi_k^j) + (\mathcal{A}_0 - \lb_k^j) (R_k^j) \\
& = k_2 \eta_k^j e_5 + k_4 \gamma_k^j e_6 \; \mbox{(this follows from} \; (\ref{AandA0})).
\end{array}$$

Since $\mathcal{A}_0$ and $P_k^j$ commute, then applying $P_k^j$ to the previous identity, we get 
$$(\mathcal{A}_0 - \lb_k^j)(R_k^j) = k_2 \eta_k^j P_k^j(e_5) + k_4 \gamma_k^j  P_k^j(e_6).$$
Thus $$R_k^j = k_2 \eta_k^j (\mathcal{A}_0 - \lb_k^j)^{-1} P_k^j(e_5) + k_4 \gamma_k^j (\mathcal{A}_0 - \lb_k^j)^{-1} P_k^j(e_6).$$

Writing $e_5$ in the orthonormal basis ${\cal F}_0,$ it follows 

$$\begin{array}{lll}
P_k^j(e_5)&=&\displaystyle\sum_{i\in I_0,l=0,...,m_i-1} <\tilde{\phi_i^l},e_5>_{\mathcal{H}}\tilde{\phi_i^l} \\
&+&\displaystyle \sum_{|l|\geq k_0, l \neq k} \left[<e_5, \phi_l^j>_{\mathcal{H}} \cdot \phi_l^j +<e_5, \phi_l^{j+1}>_{\mathcal{H}} \cdot \phi_l^{j+1} > \right]\\
&+& <e_5, \phi_k^{j+1}>_{\mathcal{H}} \cdot \phi_k^{j+1},
\end{array}$$
where the exponent $j$ is defined modulo $2$.\\
Then 
$$
\begin{array}{lllll}
\| (\mathcal{A}_0 - \lb_k^j)^{-1} (P_k^j(e_5)) \|_{\mathcal{H}} & \leq& \displaystyle \sum_{i\in I_0,l=0,...,m_i-1} |<e_5,\tilde{\phi_i^l}>_{\mathcal{H}}|\cdot \dfrac{1}{|\kappa_i^{0} - \lb_k^j|}   \\
&+&\displaystyle \sum_{|l|\geq k_0, l \neq k} \left[|<e_5, \phi_l^j>_{\mathcal{H}}| \cdot \dfrac{1}{|\lb_l^{j,0} - \lb_k^j|} \right. \\
&+&\left. |<e_5, \phi_l^{j+1}>_{\mathcal{H}}| \cdot \dfrac{1}{|\lb_l^{j+1,0} - \lb_k^j|}  > \right]\\
&+& |<e_5, \phi_k^{j+1}>_{\mathcal{H}}| \cdot \dfrac{1}{|\lb_k^{j+1,0} - \lb_k^j|}   \\
&\leq& C \| e_5 \|_{\mathcal{H}} + |<e_5, \phi_k^{j+1}>_{\mathcal{H}}| \cdot \dfrac{1}{|\lb_k^{j+1,0} - \lb_k^j|},
\end{array}
$$
and similarly 
$$
\begin{array}{lll}
\| (\mathcal{A}_0 - \lb_k^j)^{-1} (P_k^j(e_6)) \|_{\mathcal{H}} & \leq&  C \| e_6 \|_{\mathcal{H}} + |<e_6, \phi_k^{j+1}>_{\mathcal{H}}| \cdot \dfrac{1}{|\lb_k^{j+1,0} - \lb_k^j|}.
\end{array}
$$

The existence of the constant $C$ independent of $k$ in the latest expressions comes from the fact that $l$ is different from $k$ in the sum. Indeed the behaviour of $|\lb_l^{j,0} - \lb_k^j|$ and that of $|\lb_l^{j+1,0} - \lb_k^j|$ are given by (\ref{dl1}), (\ref{dl2}), (\ref{dl01}) and (\ref{dl02}). They are both bounded from below by a constant independent of $k$ and $l$. Note that this still holds in the particular cases described by Proposition \ref{asympexp3}.\\
\\
The expression $|\lb_k^{j+1,0} - \lb_k^j|$ is not bounded from below by a constant independent of $k$. The same asymptotic expansions prove that, for $j=1;2$:

\be |\lb_k^{j+1,0} - \lb_k^j| = \goko. \ee

Thus, using (\ref{etakj}), the result follows as soon as it has been proved, for $j=1;2$: 

\be \label{lastesti} 
|<e_5, \phi_k^{j+1}>_{\mathcal{H}}|= \goko \; \mbox{and} \; |<e_6, \phi_k^{j+1}>_{\mathcal{H}}| = \goko.
\ee

Since $\phi_k^j:= (u_k^{j,0},v_k^{j,0},y_k^{j,0},z_k^{j,0},\eta_k^{j,0},\gamma_k^{j,0}) \in D(\caA),$ and $e_5=(0,0,0,0,1,0)$ then 
\be\label{cinq} <e_5, \phi_k^{j+1}>_{\mathcal{H}}= \eta_k^{j,0}=v_k^{j,0}(1),\ee 
and $(u_k^{j,0},v_k^{j,0},y_k^{j,0},z_k^{j,0})$ is solution of System (\ref{syseig}) with $\lb = \lb_k^{j,0} = i h_k^{j,0}, h_k^{j,0} \in \R$. In particular $(i)$ is

$$(u_k^{j,0})_{xx}+(y_k^{j,0})_x = -(h_k^{j,0})^2 u_k^{j,0}.$$

For simplicity, the indices and exponents are dropped from now on. 

$$\begin{array}{ll}
\dint_0^1 (u_{xx} + y_x)(x) \cdot (\overline{u_x+y})(x) dx &=- \dint_0^1 h^2 u(x) \cdot (\overline{u_x+y})(x) dx \\
&= - h^2 \left( \dint_0^1 u(x)\overline{u_x}(x) dx + \dint_0^1 u(x)\overline{y}(x) dx \right).
\end{array}$$

Integrating by parts, it follows $$\dint_0^1 u(x) \overline{u_x}(x) dx = - \dint_0^1 u_x(x) \overline{u}(x) dx + |u(1)|^2 + |u(0)|^2$$ and, due to $(iii)$ of System (\ref{syseig}):
$$2 \Re \left \{ \int_0^1 u(x) \cdot \overline{u_x}(x) dx \right \} = |u(1)|^2. $$ 
And thus 
\be \label{first} 2 \Re \left \{ \int_0^1 (u_{xx} + y_x)(x) (\overline{u_x+y})(x) dx \right \} = - h^2 \left( |u(1)|^2 + 2 \Re \left \{ \int_0^1 u(x) \overline{y}(x) dx \right \} \right). \ee
On the other hand, after an integration by parts, it holds:
$$\begin{array}{lll}
\dint_0^1 (u_{xx} + y_x) \cdot (\overline{u_x+y}) dx&= & - \dint_0^1 (u_{x} + y) \cdot (\overline{u_{xx}+y_x}) dx  \\
&+&|u_x(1) + y(1)|^2 - |u_x(0) + y(0)|^2
\end{array}
$$
which implies
\be \label{second} 2 \Re \left \{ \int_0^1 (u_{xx} + y_x)(x) \cdot (\overline{u_x+y})(x) dx \right \} = |u_x(1) + y(1)|^2 - |u_x(0) + y(0)|^2. \ee 

Now, using $v = \lb u$ and $z= \lb y$ (cf. the system just before System (\ref{syseig})), (\ref{first}) and (\ref{second}) imply

\be \label{third} |u_x(1) + y(1)|^2 + h^2 |u(1)|^2 = |u_x(0) + y(0)|^2 - 2 \Re \left \{ \int_0^1 v(x) \overline{z}(x) dx \right \}. \ee  

Then, $(v)$ of System (\ref{syseig}) with $k_2=0$ ($\phi_k^j$ is an eigenfunction of $\mathcal{A}_0$) leads to \\
$k_1^2 \cdot |u_x(1) + y(1)|^2 = |\lb|^4 \cdot |u(1)|^2 = h^4 \cdot |u(1)|^2 = h^2 \cdot |v(1)|^2$. \\
And \be\label{quatre}|u_x(1) + y(1)|^2 + h^2 |u(1)|^2 = (1 +\dfrac{h^2}{k_1^2}) |v(1)|^2.\ee
Using $(iv)$ of System (\ref{syseig}) as well as the trace Theorem applied to $u_x$ implies that there exists a constant $C_1$ such that: \\
\be\label{six}|u_x(0) + y(0)|^2 = |u_x(0)|^2 \leq C_1 \left( \| u_{xx} \|_{L^2(\Omega)}^2 + \| u_{x} \|_{L^2(\Omega)}^2 \right).\ee
Now $(i)$ of system (\ref{syseig}) gives:
$$\begin{array}{ll}
\| u_{xx} \|_{L^2(\Omega)}^2 & = \| \lb^2 u - y_x \|_{L^2(\Omega)}^2 \leq |\lb|^2 \cdot \| u \|_{L^2(\Omega)}^2 + \| y_x \|_{L^2(\Omega)}^2 \\ 
& \leq \| v \|_{L^2(\Omega)}^2 + b \| \phi \|_{\mathcal{H}}^2 \leq \| \phi \|_{\mathcal{H}}^2 + b \cdot \| \phi \|_{\mathcal{H}}^2 \leq 1 + b.
\end{array}$$
And
$$\begin{array}{ll}
\| u_{x} \|_{L^2(\Omega)}^2 & \leq \| u_x + y \|_{L^2(\Omega)}^2  + \| y \|_{L^2(\Omega)}^2 \\
& \leq \| \phi \|_{\mathcal{H}}^2 +  \dfrac{1}{|\lb|^2} \| z \|_{L^2(\Omega)}^2 \leq 1 + \dfrac{b}{|\lb|^2} \leq 1 + \dfrac{b}{h^2}.
\end{array}$$

Using successively the two previous estimates in (\ref{six}),  the Cauchy-Schwarz inequality applied to the last term of the right-hand side of (\ref{third}), (\ref{quatre}) and (\ref{cinq}), we get the first result of (\ref{lastesti}):
\be\label{concl} 
 |<e_5,\phi_k^j>_{\mathcal{H}}|\leq \left(\dfrac{C_1(2+\frac{b}{h^2})+2}{1+\frac{h^2}{k_1^2}}\right)^{1/2}. \ee  
Indeed, by definition, $h_k^{j,0}$ is the imaginary part of $\lb_k^{j,0}$ which behaves like $k$ for large values of $k$ (cf. Propositions \ref{asympexp2} and \ref{asympexp3}). \\
\\
To end this proof, let us give the sketch of the proof of the second estimate of (\ref{lastesti}). The ideas are similar to those developed just before. That is why the details are not given here. \\
It holds $<e_6,\phi_k^{j+1}>_{\mathcal{H}} = \gamma_k^{j,0} = z_k^{j,0}(1)$ with the same notation as before. \\
Integrations by parts allow to write the analogous of (\ref{third}):
\be \label{thirdbis} |y_x(1) - b u(1)|^2 + (h^2 - b) |y(1)|^2 = |y_x(0) - b u(0)|^2 + 2 b (h^2 - b) \cdot \Re \left \{ \int_0^1 y \overline{u} dx \right \}. \ee  
Long calculations, using System (\ref{syseig}), the Cauchy-Schwarz inequality as well as the trace Theorem applied to $y_x$, lead to the existence of a constant $C_2$ such that:

$$\left( \dfrac{h^2}{k_3^2} + 1 - \dfrac{b}{h^2} \right) |z(1)|^2 \leq b((b + 2) C_2 + 2 b) + b^2 (C_2 (b+1) + 2b) \dfrac{1}{h^2} + \dfrac{b^2}{h^2} |v(1)|^2.$$

Using (\ref{concl}), it follows: $|<e_6, \phi_k^{j}>_{\mathcal{H}}| = \goko.  $ 
\end{proof}


\section{Polynomial decay rate of the energy} 

\label{secdecay}

The energy is already known to be not uniformly stable (cf. Lemmas \ref{asympexp0} and \ref{asympexp1} and the remarks just below the lemmas). It is now proved to decay polynomially. To this end, the solution is explicitly expressed using the Riesz basis ${\cal F}$ of generalized eigenvectors of $\mathcal{A}$ (cf. Theorem \ref{rieszbasis}).

\begin{theorem} \label{poldecay} (Polynomial decay rate of the energy) \\ 
Assume that $a=1$ in System (\ref{e11})-(\ref{e16}). Then there exists a constant $C > 0$ such that for any initial datum $U_0 \in D(\mathcal{A})$, the energy of the system rewritten as (\ref{P}) satisfies the following estimate:

$$E(t) \leq C \cdot \dfrac{\| U_0 \|^2_{D(\mathcal{A})}}{t}, \forall t > 0,$$

where $E(t)=\dfrac{1}{2}\|U(t)\|^2_\HH.$ 
\end{theorem}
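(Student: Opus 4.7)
The plan is to combine the Riesz basis $\mathcal{F}$ of generalized eigenvectors of $\mathcal{A}$ from Theorem \ref{rieszbasis} with the fine asymptotic information on the large eigenvalues given by Propositions \ref{asympexp2} and \ref{asympexp3}. The key mechanism is that the real parts of the large eigenvalues are of order $-1/k^2$, while the extra regularity $U_0 \in D(\mathcal{A})$ produces a weight of order $k^2$ on the spectral coefficients; the elementary inequality $e^{-x}\le 1/x$, $x>0$, then converts this balance into a uniform $1/t$ factor.

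Concretely, I would first expand the initial datum along $\mathcal{F}$:
$$U_0 \;=\; \sum_{i\in I}\sum_{k=1}^{n_i}\sum_{l=1}^{\delta_{ik}} c_{ikl}\,\tilde{\psi}_{ik}^l + \sum_{|k|\ge k_0}\sum_{j=1,2} a_k^j\,\psi_k^j.$$
Using $(\mathcal{A}-\kappa_iI)\psi_{ik}^l=\psi_{ik}^{l-1}$ on the finitely many Jordan chains and $\mathcal{A}\psi_k^j=\lambda_k^j\psi_k^j$ on the simple modes, the semigroup is applied explicitly, giving
$$U(t) \;=\; \sum_{i,k,l} c_{ikl}\,e^{t\kappa_i}\sum_{m=0}^{l-1}\frac{t^m}{m!}\tilde{\psi}_{ik}^{l-m} + \sum_{|k|\ge k_0,\,j} a_k^j\,e^{t\lambda_k^j}\psi_k^j.$$
By the Riesz basis property of $\mathcal{F}$ there is a constant $C>0$ such that
$$\|U(t)\|_{\mathcal{H}}^2 \;\le\; C\Big(\sum_{i,k,l} |c_{ikl}|^2 P_{ikl}(t)^2 e^{2t\Re(\kappa_i)} + \sum_{|k|\ge k_0,\,j} |a_k^j|^2 e^{2t\Re(\lambda_k^j)}\Big),$$
where $P_{ikl}$ is a polynomial of degree at most $m_i-1$. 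Since $I$ is finite and every $\Re(\kappa_i)<0$, the first sum is bounded by $C_1(U_0)\,e^{-\delta t}$ for some $\delta>0$, and is therefore absorbed in a final bound of the form $C(U_0)/t$.

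For the infinite tail I would combine three ingredients. (i) Propositions \ref{asympexp2} and \ref{asympexp3} produce constants $\beta>0$ and $k_0\in\mathbb{N}$ such that $\Re(\lambda_k^j)\le -\beta/k^2$ for every $|k|\ge k_0$ and $j=1,2$; note that even in the degenerate subcases of Proposition \ref{asympexp3} the leading real part remains a strictly negative multiple of $k^{-2}$, because $k_1k_2,\,k_1k_4>0$. (ii) Since $\mathcal{A}\psi_k^j=\lambda_k^j\psi_k^j$ with $|\lambda_k^j|\sim k\pi$, applying $\mathcal{A}$ term by term to the expansion of $U_0$ and invoking the Riesz basis equivalence yields
$$\sum_{|k|\ge k_0,\,j} k^2|a_k^j|^2 \;\le\; C\,\|U_0\|_{D(\mathcal{A})}^2.$$
(iii) The elementary inequality $e^{-x}\le 1/x$, $x>0$, gives
$$e^{2t\Re(\lambda_k^j)} \;\le\; e^{-2\beta t/k^2} \;\le\; \frac{k^2}{2\beta t} \qquad (t>0).$$
Multiplying and summing over $k$, $j$ produces $\|\textrm{tail}(t)\|_{\mathcal{H}}^2 \le (C/t)\,\|U_0\|_{D(\mathcal{A})}^2$ for every $t>0$. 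The range $0<t\le 1$ is covered by contractivity: $E(t)\le E(0)\le \tfrac12\|U_0\|_{D(\mathcal{A})}^2\le \|U_0\|_{D(\mathcal{A})}^2/(2t)$. Combining both regimes gives the claimed estimate.

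The main obstacle I anticipate is ingredient (ii): one must verify carefully that $\mathcal{A}U_0 = \sum a_k^j\lambda_k^j\psi_k^j + (\textrm{finitely many Jordan-block contributions})$ is indeed the Riesz-basis expansion of $\mathcal{A}U_0$ in $\mathcal{F}$, and that this yields the two-sided equivalence $\|\mathcal{A}U_0\|_{\mathcal{H}}^2 \sim \sum |\lambda_k^j|^2 |a_k^j|^2$ needed to convert the $D(\mathcal{A})$-norm into a $k^2$-weighted $\ell^2$ norm of the coefficients. This should be automatic because $0\in\rho(\mathcal{A})$ keeps $|\lambda_k^j|$ bounded away from zero and $|\lambda_k^j|\sim k\pi$, but a small care is needed because the two subfamilies $(\lambda_k^1)$ and $(\lambda_k^2)$ can get arbitrarily close for large $k$, so the argument must use the Riesz basis (not orthogonality) to control the coupling.
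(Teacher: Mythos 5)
Your proposal is correct and follows essentially the same route as the paper: expansion of $U_0$ in the Riesz basis $\mathcal{F}$, explicit representation of the semigroup on the Jordan part and the simple high-frequency modes, the bound $\Re(\lambda_k^j)\le -\beta/k^2$ from Propositions \ref{asympexp2} and \ref{asympexp3}, the $k^2$-weighted coefficient bound coming from $U_0\in D(\mathcal{A})$, and the elementary inequality turning $e^{-2\beta t/k^2}$ into $Ck^2/t$ (the paper uses the boundedness of $u\mapsto u e^{-\beta u}$, which is the same device). Your extra remarks --- the explicit treatment of $0<t\le 1$ by contractivity and the caveat that the weighted coefficient estimate must be derived from the Riesz-basis inequalities applied to $\mathcal{A}U_0$ rather than from orthogonality --- are sound refinements of details the paper leaves implicit.
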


\begin{proof} Using the Riesz basis ${\cal F}$ (cf. Theorem \ref{rieszbasis}), we can write 
$$U_0=\sum_{i\in I} \sum_{k=1}^{n_i} \sum_{l=1} ^{\delta_{ik}} (u_0)_{ik}^l  \tilde{\psi_{ik}^l}
+ \sum_{|l|\geq k_0, j=1;2} (u_0)_l^j \psi_l^j.$$
The solution of (\ref{P}) is:
$$U(t)= \sum_{i\in I} e^{\kappa_i t}\left[ \sum_{k=1}^{n_i} \sum_{l=1} ^{\delta_{ik}} 
\left(\sum_{p=l}^{\delta_{ik}} (u_0)_{ik}^p \cdot \dfrac{t^{p-l}}{(p-l)!}\right) \; \tilde{\psi_{ik}^l}\right]
+ \sum_{|l|\geq k_0, j=1;2} e^{\lambda_l^j t} \cdot (u_0)_l^j\; \psi_l^j.
$$
Since ${\cal F}$ is a Riesz basis, there exists a positive constant $K$ such that the energy satisfies, for any $t>0$: 
$$E(t) \leq K \left[ \sum_{i\in I} e^{2 \Re(\kappa_i) t}\cdot \max \{ t^{m_i-1};1\}\cdot \left( \sum_{k=1}^{n_i} \sum_{l=1} ^{\delta_{ik}} 
 |(u_0)_{ik}^l|^2 \right)
+ \sum_{|l|\geq k_0, j=1;2} e^{2 \Re(\lambda_l^j) t}\cdot |(u_0)_l^j|^2\right].
$$
Using the asymptotic analysis performed in Propositions \ref{asympexp2} and \ref{asympexp3}  and since  $\Re(\lambda_l^j)<0,$ for all $|l| \geq k_0, j=1,2,$ it follows that   
$$\begin{array}{lll}
E(t) &\leq&  K  \max_{i\in I} \{ t^{m_i-1};1\} \cdot e^{-2 \min_{i \in I} (\Re(|\kappa_i|) t}
 \displaystyle \sum_{i\in I}\sum_{k=1}^{n_i} \sum_{l=1} ^{\delta_{ik}} 
 |(u_0)_{ik}^l|^2 \\
&+& K\displaystyle \sum_{|l|\geq k_0, j=1;2} \dfrac{e^{- \tilde{\beta_j} t/ l^2}}{l^2}\cdot l^2\cdot |(u_0)_l^j|^2,
\end{array} $$
where $ \tilde{\beta_j},j=1,2$ are positive constants. \\
Now, if $\beta >0$ is fixed, the function $u \mapsto u \cdot e^{-  \beta u}$ is a bounded function on $\R^+$. And

$$ \sum_{i\in I} \sum_{k=1}^{n_i} \sum_{l=1} ^{\delta_{ik}} 
 |(u_0)_{ik}^l|^2 
 \lesssim \| U_0 \|^2_{\mathcal{H}}, \; \mbox{and} \; \sum_{|l| \geq k_0, j=1;2} l^2 \cdot |(u_0)_l^j|^2  \lesssim  \| U_0 \|^2_{D(\mathcal{A})}.$$

Hence the result.
\end{proof}


\section{Numerical validation} \label{num}

The asymptotic behavior of $\lambda_k$, given by Propositions \ref{asympexp2} and \ref{asympexp3}, can be numerically validated. \\
For instance in the case $k_1=k_3=2,\; k_2=1,\;k_4=5,\; b=4 \pi^2,$ (first case of Proposition \ref{asympexp3}), we have calculated numerically some large eigenvalues near the imaginary axis. From (\ref{cp1}) and (\ref{cp2})
it holds, in that case: $k^2\Re \lambda_k^j\sim \beta_j,\;j=1,2$, with

$$\beta_1=-\dfrac{10}{\pi ^2} \approx -0.202642,\;\;\;    \beta_2=-\dfrac{10}{\pi ^2} \approx -1.01321.$$

The table below confirms this behavior.

$$
\begin{array}{|c|c|c|c|c|c|}
\hline
k & 200 & 400 & 600 & 800 & 1000 \\
\hline
k^2 \Re \lb_k^1 & -0.202667 & -0.20265 & -0.202652 & -0.202588 & -0.202637\\
\hline
k^2 \Re \lb_k^2 & -1.01303 & -1.01317 & -1.0132 & -1.01324 & -1.01314\\
\hline
\end{array}
$$

\vspace{0.5cm}
The figures hereafter represent the eigenvalues of $\mathcal{A}$ in two cases: Figure 1 corresponds to Proposition \ref{asympexp2} and Figure 2 to the first case of Proposition \ref{asympexp3}.


\begin{figure}[h]
\begin{center}
\includegraphics [scale=0.5] {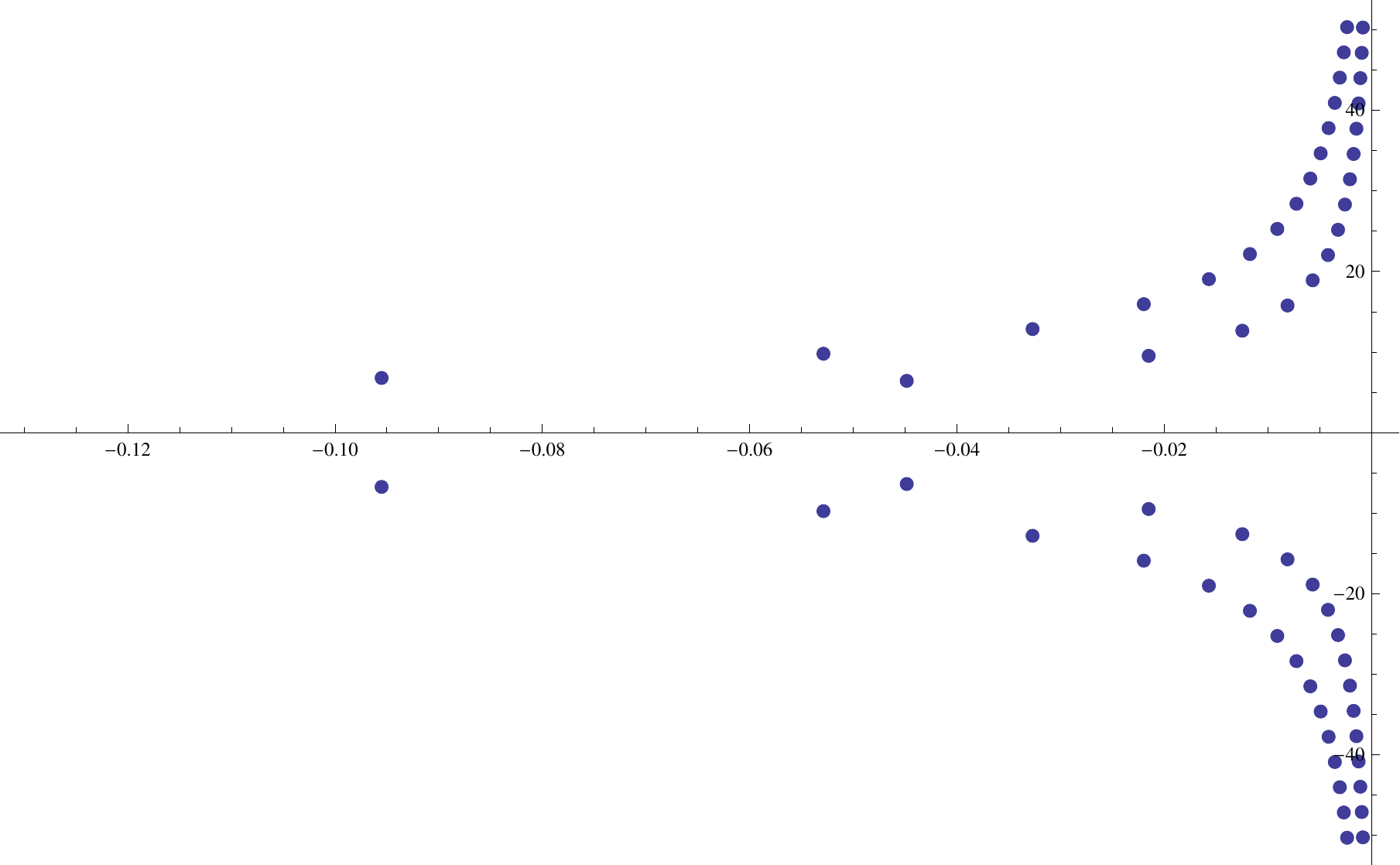}
\caption{  Eigenvalues of $\mathcal{A}$ if $a=1,b=2, k_1=1,k_2=2,k_3=3,k_4=2$ }
\label{figDN1}
\end{center}
\end{figure}


\begin{figure}[h]
\begin{center}
\includegraphics [scale=0.5] {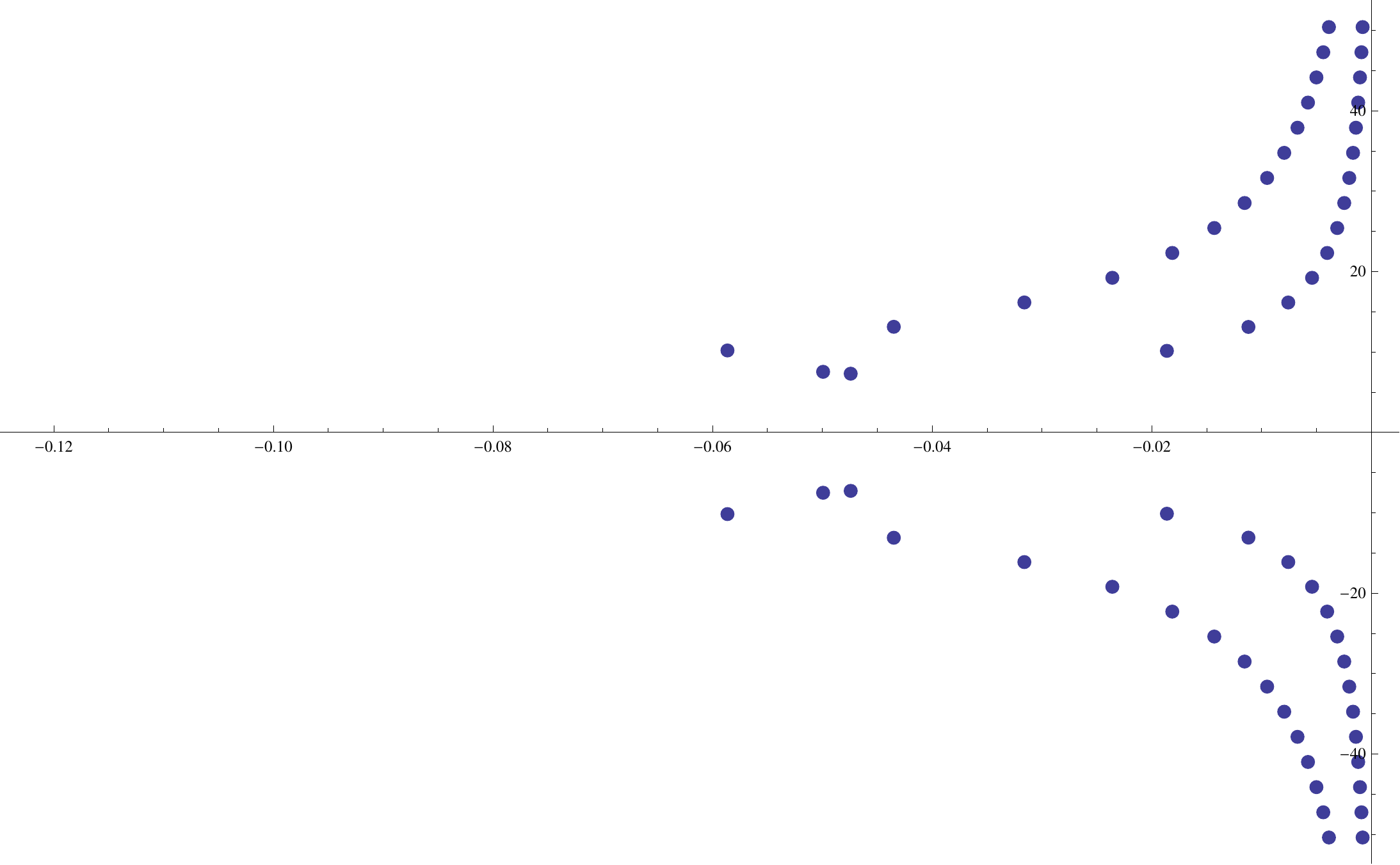}
\caption{Eigenvalues of $\mathcal{A}$ if $a = 1, b=4 \pi^2, k_1=k_3=2,\; k_2=1,\;k_4=5.$}
\label{fig2}
\end{center}
\end{figure}

\newpage

\label{lastpage-01}
    
\edc